\theoremstyle{theorem}
\newtheorem{defn}{Definition}[section]
\newtheorem{thm}[defn]{Theorem}
\newtheorem{prop}[defn]{Proposition}
\newtheorem{lem}[defn]{Lemma}
\newtheorem{fac}[defn]{Fact}
\DeclareMathOperator{\tr}{tr}
\DeclareMathOperator{\Ker}{Ker}
\DeclareMathOperator{\Ric}{Ric}
\DeclareMathOperator{\diam}{diam}
\DeclareMathOperator{\Vol}{Vol}
\def\osc{\mathop{\operator@font osc}}
\title{On the uniqueness of Sasaki-Einstein metrics}
\author{Ken'ichi Sekiya
\footnote{
Department of Mathmatics, Graduate School of Science, Osaka University\qquad\qquad\break
2000 {\it Mathematics Subject Classification}. Primary 53C25, Secondary 53C55, 53C12}
}
\date{}
\begin{document}

\maketitle

\begin{abstract}
Let $S$ be a compact Sasakian manifold which
does not admit non-trivial Hamiltonian holomorphic vector fields.
If there exists an Einstein-Sasakian metric on $S$, then it is unique. 
\end{abstract}

\section{Introduction}
The aim of this paper is to show the uniquness theorem of positive Sasaki-Einstein metrics. 
An Sasaki-Einstein manifold admits a one dimensional Reeb foliation
with a transversal K\"ahler-Einstein metric,
which is studied from many view points 
between geometry and mathematical physics.
Boyer, Galichi and Koll\'ar obtained Sasaki-Einstein metrics on a family of the links of hypersurfaces of Brieskorn-Pham type, which includes exisotic spheres. 
Guantlett, Martelli, Sparks and Waldram discoverd that there exist irregular toric Einstein-Sasaki metrics which are not obtained as 
total spaces of orbibundles on Einstein-K\"ahler orbifolds (\cite{GMSW1}, \cite{GMSW2}).
These toric examples are much explored and 
Futaki, Ono and Wang showed that every toric positive Sasakian manifold admits 
Sasaki-Einstein metrics(\cite{F1}). 
On a compact K\"ahler manifold with positive first Chern class, Bando and Mabuchi proved the uniqueness 
theorem of K\"ahler-Einstein metrics (\cite{BM1}). 
K. Cho, A. Futaki and H. Ono proved that the toric Einstein-Sasaki metric is unique up to the automorphism of a toric Sasakian manifold (\cite{CFO1}).
In the present paper, we show the following theorem,
\begin{thm}
Let $(S,\xi,\eta,\Phi)$ be a compact Sasakian manifold. 
We assume that $S$ doesn't admit nontrivial Hamiltonian holomorphic vector fields.
If $S$ has a Sasaki-Einstein metric, then the Sasaki-Einstein metric is unique. 
In other words,
if there are two Sasaki-Einstein metrics $\omega_{1}$ and $\omega_{2}$ on $S$, then $\omega_{1}=\omega_{2}$.
\end{thm} 
Our method is a generalization of Bando-Mabuchi's argument to Sasakian geometry.
We construct functionals $L$, $M$, $I$ and $J$ on the space of Sasakian structures with  
basic first Chern class.
These functionals satisfy the suitable properties as in K\"ahler geometry.
The problem of Sasaki-Einstein metrics reduces to solving the following Monge-Amp\`ere equation which gives rise to 
transversal K\"ahler-Einstein metrics with positive Ricci curvature, 
\begin{equation*}
\frac{(d\eta+\sqrt{-1}\partial_{B}\bar{\partial}_{B}u)^{m}}{(d\eta)^{m}}=\exp(-(2m+2)u+h)
\end{equation*}

The key point is to show the a priori estimate of $C^0$-norm of solutions $u$ of the Monge-Amp\`ere equation
and an intriguing point is an estimate of infimum of $u$ (lemma \ref{osc-a}).
The Monge-Amp\`ere equation only gives the transversal Ricci curvature which does not lead a lower bound of the Ricci curvature 
by a positive constant.
There is a difficulty of the $C^0$-estimate of $u_t$ since we cannot apply the Myers theorem directly to obtain an estimate of the daimeter of $S$.
We introduce a family of Sasakian structures $\{ g_{u,\mu}\}$ whose contact forms are
given by the multiplication of positive constant 
$\mu^{-1}$.
Under a suitable choice of $\mu$, it follows that the Ricci curvature of $g_{\mu,\lambda}$ is bounded from below by a positive constant. Thus
we can control their diameters by the Myeres theorem. 
An estimate of their volumes together with their diameters
 gives rise to 
 the desired estimate of solutions $u$ by using the estimate of the Green functions
(see lemma \ref{osc-a} for more detail).
Our method of the estimate is simple and effective in transversal K\"ahler metrics, which slightly different from 
the ordinary argument in K\"ahler geometry
as transversal K\"ahler classes of the family $\{ g_{u,\mu}\}$ are changing.

It must noted that Nitta obtained the theorem of uniqueness of Einstein -Sasakian metrics independently by the different method 
which heavily depends on several 
results in sub-Riemannian geometry such as the regularity of the space of piece-wise smooth horizontal paths (\cite{Ni}). 
 
An advantage of our method is that it is self-contained and 
 could be generalized to 
more general transversal K\"ahler geometry which includes 
$3$-Sasakian manifolds.

I would like to thank Ryushi Goto for helpful advice. 

\section{Sasakian manifold}

In this section we give a brief explanation of Sasakian manifolds

\begin{defn}
Let $(S,g)$ be a Riemannian manifold of dimension $2m+1$ and $C(S)$ the cone $S\times \mathbb{R}_{>0}$ with $r\in\mathbb{R}_{>0} $.
A Riemannian manifold $(S,g)$ is said to be a Sasakian manifold if the cone manifold $(C(S),\bar{g})=(S\times \mathbb{R}_{>0},dr^{2}+r^{2}g)$ is a K\"ahler manifolds with complex structure $J$ which satisfies
\begin{equation*}
\mathcal{L}_{r\frac{\partial}{\partial r}}J=0,
\end{equation*}
where $\mathcal{L}_{r\frac{\partial}{\partial r}}J$ denotes the Lie derivative of $J$ by the vector field $r\frac{\partial}{\partial r}$.
\end{defn}

A Sasakian manifold $S$ is often identified with the submanifold $\{r=1\}=S\times \{1\} \subset C(S)$.
Note that $C(S)$ is a real $2m$ dimensional manifold.

\begin{defn}
We define a vector field $\xi$ on $S$ and a $1$-form $\eta$ on $S$ by
\begin{equation*}
\xi=J\left(r\frac{\partial}{\partial r}\right),\qquad \eta(Y)=g(\xi,Y)
\end{equation*}
where $Y$ is a smooth vector field on $S$.
The vector field $\xi$ is the {\it Reeb field}.
We denote by $\mathcal{F}_{\xi}$
the $1$-dimensional foliation generated by $\xi$ which is called the {\it Reeb foliation}.
\end{defn}
Then we see that 
\begin{equation}\label{eq: xi, eta, contact}
\eta(\xi)=1,\qquad i_{\xi}d\eta=0,\qquad (d\eta)^{m}\wedge\eta\neq 0.
\end{equation}
The 1-form $\eta$ is a contact form on $S$ which defines a $2m$-dimensional subbundle $D$ of the tangent bundle $TS$, 
where at each point $p \in S$ the fiber $D_{p}$ of $D$ is given by
\begin{equation*}
D_{p}=\Ker \eta_{p}.
\end{equation*}
We call $D$ the contact bundle.
The contact bundle $D$ gives the orthogonal decomposition of the tangent bundle $TS$
\begin{equation*}
TS=D\oplus L_{\xi}
\end{equation*}
where $L_{\xi}$ is the trivial bundle generated by the Reeb field $\xi$.
A Sasakian manifold $S$ is a foliated manifold with transversally K\"ahler structure.
Then $S$ admits foliated coordinates $\{ U_\alpha\}$ compatible to the structure. 
The system of coordinates consists of an open covering 
$\{U_{\alpha}\}$ of $S$ and a submersion
$\pi_{\alpha}\colon U_{\alpha}\rightarrow V_{\alpha}\subset \mathbb{C}^{m}$  for each $\alpha$
such that  
\begin{equation*}
\pi_{\alpha}\circ \pi_{\beta}=\pi_{\beta}(U_{\alpha}\cap U_{\beta})\rightarrow \pi_{\alpha}(U_{\alpha}\cap U_{\beta}),\qquad 
U_{\alpha}\cap U_{\beta}\neq 0
\end{equation*}
is biholomorphic, where $V_\alpha$ is an open set of $\mathbb C^m$.
On each $V_{\alpha}$ there is a K\"ahler structures given by the following.
The restriction of the Sasaki metric $g$ to $D$ gives a well-defined Hermitian metric $g_{\alpha}^{T}$ on $V_{\alpha}$
under the canonical isomorphism 
\begin{equation*}
d\pi_{\alpha} \colon D_{p}\rightarrow T_{\pi_{\alpha}(p)}V_{\alpha}
\end{equation*}
for any $p \in U_{\alpha}$.
Hence we have the transversally Hermitian structure on $S$.
Let $(z^{1},z^{2},\ldots,z^{m})$ be the local holomorphic coordinates on $V_{\alpha}$.
We pull back these to $U_{\alpha}$ and still write them as $(z^{1},z^{2},\ldots,z^{m})$.
Let $x$ be the coordinate along the leaves with $\xi =\frac{\partial}{\partial x}$
Then $(x,z^{1},z^{2},\ldots,z^{m})$ form local coordinates on $U_{\alpha}$.
We denote by $(D\otimes\mathbb{C})^{p,q}$ the set of forms 
of type $(p,q)$ on $S$. Then
$(D \otimes \mathbb{C})^{1,0}$ is spanned by the vectors of the form
\begin{equation*}
\frac{\partial}{\partial z^{i}}-\eta\left(\frac{\partial}{\partial z^{i}}\right)\xi,\qquad i=1,2,\ldots,m.
\end{equation*}
Since $i_{\xi}d\eta=0$,
\begin{equation*}
d\eta\left(\frac{\partial}{\partial z^{i}}-\eta\left(\frac{\partial}{\partial z^{i}}\right)\xi,
\overline{\frac{\partial}{\partial z^{j}}-\eta\left(\frac{\partial}{\partial z^{j}}\right)\xi}\right)
=d\eta\left(\frac{\partial}{\partial z^{i}},\overline{\frac{\partial}{\partial z^{j}}}\right).
\end{equation*}
Thus the fundamental 2-form $\omega_{\alpha}$ of the Hermitian metric $g_{\alpha}^{T}$ on $V_{\alpha}$
is the same as the restriction of $d\eta$ to the slice $\{x={\rm constant}\}$ in $U_{\alpha}$.
Since the restriction of a closed 2-form to a submanifold is closed, then $\omega_{\alpha}$ is closed.
By this construction
\begin{equation*}
\pi_{\alpha}\circ\pi_{\beta}^{-1}\colon \pi_{\beta}(U_{\alpha}\cap U_{\beta})\rightarrow \pi_{\alpha}(U_{\alpha}\cap U_{\beta})
\end{equation*}
gives an isometry of the K\"ahler structure.

\begin{defn}\label{def: foliation chart}
The collection of K\"ahler metrics $\{g_{\alpha}^{T}\}$ on $\{V_{\alpha}\}$ is called a transverse K\"ahler metric.
Since they are isometric over the intersections we suppress $\alpha$ and denote it by $g^{T}$. We call coordinates system $(x,z^{1},z^{2},\ldots,z^{m})$ given above a foliation chart.
\end{defn}

We also write $\Ric^{T}$ and $s^{T}$ for Ricci curvature of $g^{T}$ and scalar curvature of that.
It should be emphasized that, though $g^{T}$ are defined only locally on each $V_{\alpha}$,
the pull-back to $U_{\alpha}$ of the K\"ahler forms $\omega_{\alpha}$ on $V_{\alpha}$ patch together
and coincide with the global form $d\eta$ on $S$,
and $d\eta$ can even be lifted to the cone $C(S)$ by pull-back.
For this reason we often refer to $d\eta$ as the K\"ahler form of the transverse K\"ahler form of the transverse K\"ahler metric $g^{T}$.
The next is a well known result.

\begin{thm}[\cite{F1}]\label{R-R^{T}}
Let $(S,g)$ be a Sasakian manifold.
Then, we have
\begin{align*}
\Ric(X,\,\xi\,)&=2m\,\eta(X) ,\qquad \forall X \in TS\\
\Ric(X,Y)&=\Ric^{T}(X,Y)-2g(X,Y),\qquad\forall X,Y\in D
\end{align*}
\end{thm}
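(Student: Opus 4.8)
The plan is to reduce both identities to the two fundamental covariant-derivative formulas of Sasakian geometry and then trace. First I would record the structure equations that follow from the defining hypothesis that the metric cone $(C(S),\bar g)$ is K\"ahler. Writing $\Phi$ for the $(1,1)$-tensor on $S$ induced by the complex structure $J$ of the cone (so that $\Phi\xi=0$ and $\Phi^{2}=-\mathrm{Id}+\eta\otimes\xi$ on $TS$), one obtains
\begin{equation*}
\nabla_{X}\xi=-\Phi X,\qquad (\nabla_{X}\Phi)Y=g(X,Y)\xi-\eta(Y)X
\end{equation*}
for all vector fields $X,Y$ on $S$. These are the standard consequences of $\bar\nabla J=0$ combined with the warped-product relation $\bar\nabla_{X}Y=\nabla_{X}Y-r\,g(X,Y)\,\partial_{r}$; I would derive them once at the outset, fixing the normalization $d\eta(X,Y)=2g(X,\Phi Y)$, since every subsequent step is a formal manipulation of these two equations.

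For the first identity I would compute the curvature in the Reeb direction. Feeding the structure equations into $R(X,Y)\xi=\nabla_{X}\nabla_{Y}\xi-\nabla_{Y}\nabla_{X}\xi-\nabla_{[X,Y]}\xi$ and using that $\nabla$ is torsion-free, the $\Phi$-derivatives of the $\nabla_{X}Y$ terms cancel and one is left with
\begin{equation*}
R(X,Y)\xi=(\nabla_{Y}\Phi)X-(\nabla_{X}\Phi)Y=\eta(Y)X-\eta(X)Y.
\end{equation*}
Tracing against an orthonormal frame $\{e_{a}\}_{a=0}^{2m}$ with $e_{0}=\xi$, i.e.\ forming $\Ric(X,\xi)=\sum_{a}g(R(e_{a},X)\xi,e_{a})$, the first term contributes $(2m+1)\eta(X)$ and the second contributes $-g(X,\xi)=-\eta(X)$, because $\sum_{a}\eta(e_{a})e_{a}=\xi$. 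This yields $\Ric(X,\xi)=2m\,\eta(X)$.

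For the second identity I would exploit that the local submersions $\pi_{\alpha}$ of Definition \ref{def: foliation chart} are Riemannian submersions onto the transverse K\"ahler metric $g^{T}$, with one-dimensional totally geodesic fibres along $\xi$. Splitting the horizontal connection as $\nabla_{X}Y=\nabla^{T}_{X}Y+g(\Phi X,Y)\xi$ for $X,Y\in D$ identifies the O'Neill integrability tensor as $A_{X}Y=g(\Phi X,Y)\xi$, which is essentially $d\eta$. I would then write $\Ric(X,Y)$ on $D$ as the sum of the purely horizontal part $\sum_{i}g(R(e_{i},X)Y,e_{i})$ and the single vertical term $g(R(\xi,X)Y,\xi)$. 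The vertical term equals $g(X,Y)$, again from $R(Y,\xi)\xi=Y-\eta(Y)\xi$, while O'Neill's curvature formula, traced with the help of $\Phi^{2}=-\mathrm{Id}$ on $D$ (so that $\sum_{i}g(\Phi e_{i},X)g(\Phi Y,e_{i})=-g(X,Y)$), turns the horizontal part into $\Ric^{T}(X,Y)-3g(X,Y)$. Adding the two contributions gives $\Ric(X,Y)=\Ric^{T}(X,Y)-2g(X,Y)$.

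The main obstacle I anticipate is not conceptual but one of bookkeeping the constant. The coefficient $-2$ emerges only after the horizontal O'Neill correction $-3g$ is combined with the $+g$ coming from the Reeb direction, and each of these is sensitive to the normalization in $d\eta=2g(\,\cdot\,,\Phi\,\cdot\,)$, to the sign in $\nabla_{X}\xi=-\Phi X$, and to the curvature-sign convention. I would therefore fix all conventions explicitly before tracing and treat the careful evaluation of the two $A$-tensor contractions, rather than any deeper difficulty, as the delicate point of the argument.
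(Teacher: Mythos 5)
The paper does not actually prove this statement: it is quoted as a ``well known result'' with a citation to \cite{F1}, so there is no internal proof to compare against, and your proposal must be judged on its own. Judged so, it is correct, and it is essentially the derivation found in the cited literature (Futaki--Ono--Wang, and Boyer--Galicki's book). Your two structure equations are the right starting point, and with your normalizations they are mutually consistent: $\nabla_{X}\xi=-\Phi X$ together with the convention $d\eta(X,Y)=X\eta(Y)-Y\eta(X)-\eta([X,Y])$ forces $d\eta(X,Y)=2g(X,\Phi Y)$, and it gives the vertical part of $\nabla_{X}Y$ as $g(\nabla_{X}Y,\xi)\,\xi=g(\Phi X,Y)\,\xi$, which is exactly your identification of the O'Neill tensor $A_{X}Y$. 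The Reeb-direction computation $R(X,Y)\xi=(\nabla_{Y}\Phi)X-(\nabla_{X}\Phi)Y=\eta(Y)X-\eta(X)Y$ and its trace $(2m+1)\eta(X)-\eta(X)=2m\,\eta(X)$ are correct. For the second identity, the decomposition of $\Ric(X,Y)$, $X,Y\in D$, into the horizontal trace plus the single vertical term is sound, and the vertical term is indeed $g(R(\xi,X)Y,\xi)=g(R(Y,\xi)\xi,X)=g(X,Y)$ by the pair symmetry of the curvature tensor. Since $\nabla_{\xi}\xi=-\Phi\xi=0$, the one-dimensional fibres of the local submersions $\pi_{\alpha}$ are totally geodesic, so O'Neill's horizontal curvature formula applies with no $T$-tensor terms; it gives $K(X,Y)=K^{T}(X,Y)-3\vert A_{X}Y\vert^{2}$ on horizontal planes, and your contraction $\sum_{i}g(\Phi e_{i},X)g(\Phi Y,e_{i})=-g(X,Y)$ for $X,Y\in D$ (using that $\Phi$ is a $g$-isometry of $D$) correctly turns the horizontal trace into $\Ric^{T}(X,Y)-3g(X,Y)$. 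The final bookkeeping $-3g+g=-2g$ is right, so the argument is complete; your closing caution about conventions is well placed, but as you have fixed them everything is consistent.
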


\begin{defn}
A Sasakian manifold $(S,g)$ is $\eta$-Einstein if there are two constants $\lambda$ and $\nu$ such that
\begin{equation*}
\Ric=\lambda g+\nu\,\eta\otimes\eta.
\end{equation*}
\end{defn}

\begin{defn}
A Sasaki-Einstein manifold is a Sasakian manifold $(S,g)$ with $\Ric=2mg$.
\end{defn}

\begin{defn}
A Sasakian manifold $S$ is said to be transversely K\"ahler-Einstein Sasaki manifold if
\begin{equation*}
\Ric^{T}=\tau g^{T}
\end{equation*}
for some real constan $\tau$.
\end{defn}

It is well-known that if $S$ is a transversely K\"ahler-Einstein Sasaki manifold if and only if $(S,g)$ is $\eta$-Einstein (cf\cite{BGM1}).
In fact, if $\Ric^{T}=\tau g^{T}$ then
\begin{equation*}
\Ric =(\tau-2)g+(2m+2-\tau)\eta\otimes \eta.
\end{equation*}
Conversely if $\Ric=\lambda g +\nu\eta\otimes\eta$ then
\begin{equation*}
\Ric^{T}=(\lambda +2)g^{T}.
\end{equation*}

\section{Basic form}

We introduce basic forms on Sasakian manifolds which is relevant to transversely K\"ahler-Einstein metrics on them.
Let $S$ be a compact Sasakian manifold of dimension $2m+1$.

\begin{defn}\label{basic forms}
A $p$-form $\alpha$ on $S$ is said to be basic if the following conditions hold 
\begin{equation*}
i_{\xi}\alpha=0,\qquad \mathcal{L}_{\xi}\alpha=0.
\end{equation*}
Let $\Lambda^{p}_{B}$ be the sheaf of germs of basic $p$-forms and $\Omega^{p}_{B}$ the set of all global sections of $\Lambda^{p}_{B}$.
\end{defn}

It follows from (\ref{eq: xi, eta, contact}) that $d\eta$ is a basic form.
Let $(x,z^{1},\ldots,z^{m})$ be the foliation chart on $U_{\alpha}$ as in definition \ref{def: foliation chart}.
Then we write
\begin{equation*}
\sum \alpha_{i_{1}\ldots i_{p}\bar{j}_{1}\ldots \bar{j}_{q}}
dz^{i_{1}}\wedge\cdots\wedge dz^{i_{p}}\wedge d\bar{z}^{j_{1}}\wedge\cdots\wedge d\bar{z}^{j_{q}}
\end{equation*}
for a form of type $(p,q)$ on $U_{\alpha}$.
If $U_{\alpha}\cap U_{\beta}\neq \emptyset$ and $(y,w^{1},\ldots,w^{m})$ is the foliation chart on $U_{\beta}$, then
\begin{equation*}
\frac{\partial z^{i}}{\partial \bar{w}^{j}}=0,\qquad \frac{\partial z^{i}}{\partial y}=0.
\end{equation*}
Therefore, as in K\"ahler geometry, we have a notion of forms of type $(p,q)$ which is independent of a choice of charts.
If $\alpha$ is basic, then the coefficient $\alpha_{i_{1}\ldots i_{p}\bar{j}_{1}\ldots \bar{j}_{q}}$ is a function which dose not depend on $x$.
Thus we have a well-defined operators
\begin{align*}
\partial_{B}\colon& \Lambda_{B}^{p,q}\rightarrow \Lambda_{B}^{p+1,q}\\
\bar{\partial}_{B}\colon& \Lambda_{B}^{p,q}\rightarrow \Lambda_{B}^{p,q+1}.
\end{align*}
It follows that $d\alpha$ is basic for a basic form $\alpha$.
Hence the exterior derivative $d$ preserves the basic forms and we have the basic exterior derivative $d_B$ and the complex of basic forms, 
\begin{equation*}
\cdots\to \Omega_B^p\to \Omega_B^{p+1}\to \cdots
\end{equation*}
which gives the basic cohomology group $H^p_B(S)$.
We denote by $[\alpha]_B$ the basic cohomology class  represented by a $d_B$-closed, basic $p$-form .

As in K\"ahler geometry, we have the decomposition $d_{B}=\partial_{B}+\bar{\partial}_{B}$.
Let $d_{B}^{c}=\frac{\sqrt{-1}}{2}(\bar{\partial}_{B}-\partial_{B})$.
It is clear that
\begin{equation*}
d_{B}d_{B}^{c}=\sqrt{-1}\partial_{B}\bar{\partial}_{B},\qquad d_{B}^{2}=(d_{B}^{c})^{2}=0
\end{equation*}
Let $\partial_{B}^{*}$ be the adjoint operator of $\partial_{B}$ and $\bar{\partial}_{B}^{*}$ the adjoint operator of $\bar{\partial}_{B}$ with respect to the transversally K\"ahler metric $g^T$.
The basic Laplacian and the basic Dolbeault Laplacian are defined by
\begin{align*}
\triangle^{B}&=d_{B}^{*}d_{B}+d_{B}d_{B}^{*}\\
\square^{B}&=\bar{\partial}_{B}^{*}\bar{\partial}_{B}+\bar{\partial}_{B}\bar{\partial}_{B}^{*}.
\end{align*}
On a Sasakian maifold, the $\partial\overline{\partial}$-lemma holds for basic forms.

\begin{prop}[\cite{E1}]\label{p-lem}
Let $\alpha$ and $\beta$ be
two basic forms of type $(1,1)$ on a compact Sasakian manifold $S$ with $[\alpha]_{B}=[\beta]_{B}\in H^2_B(S)$
Then there is a basic function $h$ such that
\begin{equation*}
\alpha=\beta+\sqrt{-1}\partial_{B}\bar{\partial}_{B}h.
\end{equation*}
\end{prop}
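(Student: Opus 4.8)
The plan is to reduce the statement to the basic analogue of the classical $\partial\bar\partial$-lemma and then run the standard Hodge-theoretic argument transversally. Since $[\alpha]_B=[\beta]_B$, the difference $\psi:=\alpha-\beta$ is a real, basic, $d_B$-closed $(1,1)$-form which is moreover $d_B$-exact. Writing $\psi=d_B\gamma$ and replacing $\gamma$ by its real part (permissible, as $\psi$ is real and $d_B$ is a real operator), I may assume $\gamma$ is real; decomposing $\gamma=\gamma^{1,0}+\overline{\gamma^{1,0}}$ and comparing types shows $\partial_B\gamma^{1,0}=0$ and $\psi=\bar\partial_B\gamma^{1,0}+\partial_B\overline{\gamma^{1,0}}$. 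Thus everything comes down to showing that a $d_B$-closed, $d_B$-exact basic $(1,1)$-form lies in the image of $\partial_B\bar\partial_B$ acting on basic functions.

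The essential input is basic Hodge theory for the Reeb foliation. Because $\xi$ generates an isometric (Riemannian) foliation $\mathcal{F}_{\xi}$ whose transverse structure is K\"ahler, the results of \cite{E1} provide a Hodge decomposition of the complex of basic forms with finite-dimensional spaces of basic harmonic forms, a basic Green operator $G$, and a harmonic projection $\mathcal{H}$, all behaving exactly as in the compact K\"ahler case even though $\triangle^{B}$ is only transversally elliptic. In particular the transverse K\"ahler identities hold: the basic Laplacians satisfy $\triangle^{B}=2\square^{B}$, so that $\mathcal{H}$ and $G$ are independent of whether one uses $d_B$, $\partial_B$, or $\bar\partial_B$, and $G$ commutes with $\partial_B,\bar\partial_B$ and their formal adjoints; moreover $\bar\partial_B\partial_B^{*}=-\partial_B^{*}\bar\partial_B$. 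I would first assemble these facts precisely, since they are exactly what make the transverse argument formally identical to the K\"ahler one.

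With this in hand the computation is routine. Since $\psi$ is $d_B$-exact, $\mathcal{H}\psi=0$; being of type $(1,1)$ and $d_B$-closed it is $\bar\partial_B$-closed, so the $\bar\partial_B$-Hodge decomposition gives $\psi=\bar\partial_B\theta$ with $\theta=\bar\partial_B^{*}G\psi$ of type $(1,0)$. Applying the $\partial_B$-Hodge decomposition to $\theta$ and then $\bar\partial_B$, the harmonic term dies because basic harmonic forms are $\bar\partial_B$-closed, the term $\bar\partial_B\partial_B^{*}\partial_B G\theta$ is rewritten via $\bar\partial_B\partial_B^{*}=-\partial_B^{*}\bar\partial_B$ and then vanishes because $\partial_B\psi=0$ and $G$ commutes with $\partial_B$, and the surviving term is $\bar\partial_B\partial_B(\partial_B^{*}G\theta)=-\partial_B\bar\partial_B(\partial_B^{*}G\theta)$. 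Hence $\psi=\partial_B\bar\partial_B f$ for some basic function $f$; using that $\psi$ is real, one checks that its imaginary part $h$ satisfies $\psi=\sqrt{-1}\partial_B\bar\partial_B h$ with $h$ real, that is, $\alpha=\beta+\sqrt{-1}\partial_B\bar\partial_B h$.

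The genuine obstacle lies entirely in the second step: justifying that basic harmonic theory and the transverse K\"ahler identities hold on a compact Sasakian manifold. The basic Laplacian fails to be elliptic in the leaf direction, so ordinary elliptic Hodge theory does not apply off the shelf; one must invoke the transverse ellipticity of $\triangle^{B}$ along $\mathcal{F}_{\xi}$ together with the finiteness and decomposition theorems of \cite{E1}. Once that machinery is imported, the manipulations of the previous paragraph are verbatim the K\"ahler $\partial\bar\partial$-lemma and no further analytic difficulty arises. I would therefore devote the bulk of the write-up to stating the needed consequences of \cite{E1} carefully and to verifying that each operator identity survives the passage to basic forms.
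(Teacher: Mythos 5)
Your argument is correct: it is the standard K\"ahler $\partial\bar\partial$-lemma computation transported to basic forms, resting on the basic Hodge decomposition and transverse K\"ahler identities from El Kacimi-Alaoui's transversally elliptic theory. The paper gives no proof of its own and simply cites \cite{E1} for exactly this machinery, so your write-up follows essentially the same route, merely making the deferred details explicit.
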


As in \cite{F1} a new Sasakian structure fixing $\xi$ and varying $\eta$ is given by
\begin{equation*}
\eta_{\varphi}=\eta+d_{B}^{c}\varphi
\end{equation*}
where $\varphi$ is a small basic function preserving the positivity 
condition.
Since $\dim_{\mathbb{R}}S=2m+1$, a basic form with degree more than $2m+1$ is zero.
Since $d\eta$ is basic, it follows from the Stokes theorem that we have
\begin{equation*}
\int_{S} d_{B}\alpha\wedge\beta\wedge\eta=-(-1)^{\deg\alpha}\int_{S}\alpha\wedge d_{B}\beta\wedge\eta,
\end{equation*}
where $\alpha$, $\beta$ are basic and $\deg\alpha+\deg\beta=2m-1.$
Since $d^c_B\varphi$ is basic, for a basic $2m$-form $\gamma$, we also have
\begin{equation*}
\int_{S}\gamma\wedge\eta=\int_{S}\gamma\wedge\eta_{\varphi}.
\end{equation*}
Therefore, in virtually, the results in K\"ahler geometry which can be proved only using the Stokes theorem, including the integration by parts still holds on compact Sasaki manifolds by using the contact form $\eta$. 

\begin{lem}\label{volume}
\begin{equation*}
\int_{S}(d\eta)^{m}\wedge\eta=\int_{S}(d\eta_{\varphi})^{m}\wedge\eta_{\varphi}
\end{equation*}
\end{lem}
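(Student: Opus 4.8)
The plan is to interpolate between the two contact forms and show that the volume stays constant along the interpolation. I set $\eta_t = \eta + t\, d_B^c\varphi$ for $t\in[0,1]$, so that $\eta_0=\eta$ and $\eta_1=\eta_\varphi$, and abbreviate $\psi := d_B^c\varphi$, which is a basic $1$-form. Since $\psi$ is basic, $d\eta_t = d\eta + t\, d_B\psi = d\eta + t\sqrt{-1}\partial_B\bar\partial_B\varphi$ is a basic $2$-form for each $t$. I then consider
\[
f(t) = \int_S (d\eta_t)^m\wedge\eta_t
\]
and aim to show $f'(t)\equiv 0$, which yields $f(1)=f(0)$, i.e.\ the assertion.

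Differentiating under the integral sign, using $\tfrac{d}{dt}\eta_t = \psi$ and $\tfrac{d}{dt}(d\eta_t) = d_B\psi$, I would obtain
\[
f'(t) = \int_S\Big( m(d\eta_t)^{m-1}\wedge d_B\psi\wedge\eta_t + (d\eta_t)^m\wedge\psi\Big).
\]
The second term vanishes identically: $(d\eta_t)^m\wedge\psi$ is a product of basic forms of total degree $2m+1$, and there is no nonzero basic form of degree exceeding $2m$ on the $(2m+1)$-dimensional manifold $S$. For the same degree reason, writing $\eta_t=\eta+t\psi$ in the first term, the $t\psi$-contribution $(d\eta_t)^{m-1}\wedge d_B\psi\wedge\psi$ is basic of degree $2m+1$ and hence drops out, leaving
\[
f'(t) = m\int_S (d\eta_t)^{m-1}\wedge d_B\psi\wedge\eta.
\]

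It then remains to see that this last integral is zero. Since $d\eta_t$ is closed and basic, $(d\eta_t)^{m-1}$ is $d_B$-closed, so $(d\eta_t)^{m-1}\wedge d_B\psi = d_B\big((d\eta_t)^{m-1}\wedge\psi\big)$. Setting $\alpha = (d\eta_t)^{m-1}\wedge\psi$, a basic form of degree $2m-1$, I would integrate by parts exactly as in the principle recorded just before the lemma: because $\alpha\wedge d\eta$ has degree $2m+1$ and is basic, hence vanishes, one has $d(\alpha\wedge\eta)=d_B\alpha\wedge\eta$, and Stokes' theorem on the closed manifold $S$ forces $\int_S d_B\alpha\wedge\eta = 0$. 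Thus $f'(t)=0$ for every $t$, and the lemma follows.

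I do not expect a genuine obstacle here; the only points demanding care are the systematic use of the vanishing of basic forms of degree $>2m$ and the sign bookkeeping in the integration by parts. An equivalent route, if preferred, is to expand $(d\eta_\varphi)^m\wedge\eta_\varphi$ binomially, discard the trailing $d_B^c\varphi$ factor and every term containing $\sqrt{-1}\partial_B\bar\partial_B\varphi = d_B d_B^c\varphi$ by the same Stokes argument, leaving only $(d\eta)^m\wedge\eta$.
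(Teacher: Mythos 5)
Your proof is correct, and it reaches the result by a slightly different organization of the same two ingredients the paper uses: the vanishing of basic forms of degree exceeding $2m$ on the $(2m+1)$-dimensional $S$, and the Stokes identity $\int_S d_B\alpha\wedge\eta=\int_S d(\alpha\wedge\eta)=0$ for a basic $(2m-1)$-form $\alpha$ (valid because the correction term $\alpha\wedge d\eta$ is a basic $(2m+1)$-form, hence zero). The paper expands $(d\eta+d_Bd_B^c\varphi)^m\wedge(\eta+d_B^c\varphi)$ binomially and disposes of each term with $k\geq 1$ factors of $d_Bd_B^c\varphi$ by exhibiting it, modulo a vanishing basic $(2m+1)$-form, as the exact form $d\bigl((d\eta)^{m-k}\wedge d_B^c\varphi\wedge(d_Bd_B^c\varphi)^{k-1}\wedge\eta\bigr)$, so that only the $k=0$ term survives. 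You instead interpolate $\eta_t=\eta+t\,d_B^c\varphi$ and show $f'(t)=0$, which collapses the whole telescoping family into the single identity $\int_S d_B\bigl((d\eta_t)^{m-1}\wedge\psi\bigr)\wedge\eta=0$, uniform in $t$; all your intermediate steps (discarding $(d\eta_t)^m\wedge\psi$ and the $t\psi$-contribution by degree, pulling $d_B$ out of $(d\eta_t)^{m-1}\wedge d_B\psi$ using closedness of $d\eta_t$) check out, including the signs. What each approach buys: the paper's expansion is more elementary, needing no differentiation under the integral sign; your variational route has cleaner bookkeeping (one integration by parts instead of one per binomial term) and shows the stronger statement that the volume is constant along any smooth path in $\Omega$, which is exactly the type of computation underlying the path-independence of the functionals $L$ and $J$ in Propositions \ref{LM} and \ref{IJ}. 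A minor point in your favor: you invoke the correct fact that basic forms of degree $>2m$ vanish (the transverse dimension being $2m$), whereas the paper's blanket statement in Section 3 says ``degree more than $2m+1$'' but actually uses, as you do, the vanishing of basic $(2m+1)$-forms. Your closing remark correctly identifies the binomial expansion as the equivalent alternative --- that is precisely the paper's own proof.
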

\begin{proof}
\begin{align}
\int_{S}(d\eta_{\varphi})^{m}\wedge\eta_{\varphi}
&=\int_{S}(d\eta+dd_{B}^{c}\varphi)^{m}\wedge(\eta+d_{B}^{c}\varphi)\\
&=\int_{S}\sum\left(\begin{array}{c}m\\k\end{array}\right)(d\eta)^{m-k}\wedge(d_{B}d_{B}^{c}\varphi)^{k}\wedge(\eta+d_{B}^{c}\varphi)\\
&=\int_{S}\sum\left(\begin{array}{c}m\\k\end{array}\right)(d\eta)^{m-k}\wedge(d_{B}d_{B}^{c}\varphi)^{k}\wedge\eta\label{eq: 3-4}\\
&\qquad+\int_{S}\sum\left(\begin{array}{c}m\\k\end{array}\right)(d\eta)^{m-k}\wedge(d_{B}d_{B}^{c}\varphi)^{k}\wedge d_{B}^{c}\varphi. \label{eq: 3-5}
\end{align}
The last term (\ref{eq: 3-5})  is zero because it is a basic $(2m+1)$-form.
In the case of $k\geq 1$, the term (\ref{eq: 3-4}) is given by 
\begin{align}
&d\left((d\eta)^{m-k}\wedge d_{B}^{c}\varphi\wedge(d_{B}d_{B}^{c}\varphi)^{k-1}\wedge\eta\right)\\
&=(d\eta)^{m-k}\wedge(d_{B}d_{B}^{c}\varphi)^{k}\wedge\eta-(d\eta)^{m-k}\wedge d_{B}^{c}\varphi\wedge(d_{B}d_{B}^{c}\varphi)^{k-1}\wedge d\eta.\label{eq: 7}
\end{align}
The second term of (\ref{eq: 7}) is zero because it is a basic $(2m+1)$-form.
Therefore the result follows from Stokes theorem
\begin{equation*}
\int_{S}(d\eta)^{m}\wedge\eta=\int_{S}(d\eta_{\varphi})^{m}\wedge\eta_{\varphi}.
\end{equation*}
\end{proof}

\begin{defn}\label{def: Ric}
A collection of $(1,1)$-forms $\rho_{\alpha}^{T}$ on $V_{\alpha}\subset \mathbb{C}^{m}$ is given by
\begin{equation*}
\rho_{\alpha}^{T}=-\sqrt{-1}\partial\bar{\partial}\log\det(g^{T}_{\alpha}).
\end{equation*}
Then the collection of the pullback of forms $\pi_{\alpha}^{*}\rho_{\alpha}^{T}$
defines a basic from of type $(1,1)$
which is called the transverse Ricci form.
We denote by $\Ric^{T}$the transverse Ricci form as well as the transverse Ricci tensor.
To emphasize transverse Ricci form with respect to $d\eta$, we often write $\Ric^{T}(d\eta)$.
 \end{defn}

There is a necessary condition for the existence of transversely K\"ahler-Einstein metrics.

\begin{prop}[\cite{F1}]
The transverse Ricci form $\Ric^{T}(d\eta)$ is represented by $\tau d\eta$ for some constant $\tau$ if and only if $c_{1}(D)$ is zero where $D=\Ker \eta$ is contact bundle.
\end{prop}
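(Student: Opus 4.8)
The plan is to translate both conditions into the single cohomological statement that the class $[\Ric^T(d\eta)]_{B}$ lies on the line $\mathbb{R}[d\eta]_{B}$ inside the basic cohomology $H^{2}_{B}(S)$, and to detect this line as the kernel of the natural ``forget basic'' homomorphism $\iota_{*}\colon H^{2}_{B}(S)\to H^{2}(S;\mathbb{R})$ induced by the inclusion of basic forms into all forms (this map is well defined since a $d_{B}$-exact basic form $d_{B}\beta=d\beta$ is $d$-exact). First I would record the two computations that feed the argument: on the one hand $\iota_{*}[\Ric^{T}(d\eta)]_{B}=2\pi c_{1}(D)$, because $\Ric^{T}=-\sqrt{-1}\partial_{B}\bar{\partial}_{B}\log\det g^{T}$ is the trace of the curvature of the transverse Chern connection on the normal bundle $\nu(\mathcal{F}_{\xi})\cong D$ of the Reeb foliation; on the other hand $\iota_{*}[d\eta]_{B}=0$, since $d\eta$ is already exact as an ordinary form on $S$.

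The engine of the proof is the Gysin-type long exact sequence relating basic and de Rham cohomology of the Reeb foliation. I would obtain it from the short exact sequence of complexes $0\to\Omega^{*}_{B}\to\Omega^{*}_{\xi}\xrightarrow{\,i_{\xi}\,}\Omega^{*-1}_{B}\to 0$, where $\Omega^{*}_{\xi}$ denotes the $\xi$-invariant forms, together with the quasi-isomorphism $\Omega^{*}_{\xi}\simeq\Omega^{*}(S)$ furnished by averaging over the closure of the Reeb flow (a torus acting by isometries). Tracking the connecting homomorphism shows it is, up to sign, the cup product $\wedge[d\eta]_{B}$, so the relevant segment reads
\begin{equation*}
H^{0}_{B}(S)\xrightarrow{\ \wedge[d\eta]_{B}\ }H^{2}_{B}(S)\xrightarrow{\ \iota_{*}\ }H^{2}(S;\mathbb{R}).
\end{equation*}
Since $S$ is connected, $H^{0}_{B}(S)=\mathbb{R}$ and the first map sends $c$ to $c\,[d\eta]_{B}$, so its image is exactly $\mathbb{R}[d\eta]_{B}$. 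Exactness at $H^{2}_{B}(S)$ then gives the crucial identity $\ker\iota_{*}=\mathbb{R}[d\eta]_{B}$.

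With this in hand both implications are immediate. If $\Ric^{T}(d\eta)$ is represented by $\tau\,d\eta$, i.e. $[\Ric^{T}]_{B}=\tau[d\eta]_{B}$, then $2\pi c_{1}(D)=\iota_{*}[\Ric^{T}]_{B}=\tau\,\iota_{*}[d\eta]_{B}=0$, so $c_{1}(D)=0$. Conversely, if $c_{1}(D)=0$ then $\iota_{*}[\Ric^{T}]_{B}=0$, hence $[\Ric^{T}]_{B}\in\ker\iota_{*}=\mathbb{R}[d\eta]_{B}$, which says $[\Ric^{T}]_{B}=\tau[d\eta]_{B}$ for some $\tau\in\mathbb{R}$; if one wants the form-level normalization $\Ric^{T}=\tau\,d\eta+\sqrt{-1}\partial_{B}\bar{\partial}_{B}f$ this follows from the basic $\partial_{B}\bar{\partial}_{B}$-lemma (Proposition \ref{p-lem}). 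The main obstacle is entirely contained in the exact sequence rather than in the deduction: one must justify the averaging quasi-isomorphism $\Omega^{*}_{\xi}\simeq\Omega^{*}(S)$ in the possibly irregular case, and, above all, establish exactness at $H^{2}_{B}(S)$ so that $\ker\iota_{*}$ is \emph{exactly} the one-dimensional space $\mathbb{R}[d\eta]_{B}$ and nothing larger. Once that is secured the remainder is formal.
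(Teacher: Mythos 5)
Your proposal is correct and is essentially the same argument as the paper's source: the paper states this proposition without proof, citing \cite{F1}, and the proof there is exactly your Gysin-type long exact sequence for the Reeb foliation, with $\ker\iota_{*}=\mathbb{R}[d\eta]_{B}$ from exactness at $H^{2}_{B}(S)$, combined with $\iota_{*}[\Ric^{T}(d\eta)]_{B}=2\pi c_{1}(D)$ and $\iota_{*}[d\eta]_{B}=0$. The two points you flag as the real content are indeed the only substantive inputs, and both are standard for isometric flows: $\xi$ is Killing for the Sasakian metric on compact $S$, so its flow closure is a torus of isometries and averaging gives the quasi-isomorphism $\Omega^{*}_{\xi}\simeq\Omega^{*}(S)$, while surjectivity of $i_{\xi}$ in your short exact sequence is witnessed by $\beta\mapsto\eta\wedge\beta$ and the connecting map is computed from $d(\eta\wedge\beta)=d\eta\wedge\beta$.
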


From now on we always assume that $c_{1}(D)=0$.

\section{Functionals on compact Sasakian manifolds}

In this section, we define functionals on compact Sasakian manifolds which are analogous to the ones in K\"ahler geometry.
We define $\Omega$ by
\begin{equation*}
\Omega=\{\varphi \;\vert\; \varphi \text{ is basic and $d\eta_{\varphi}=\eta+d^c_{B}\varphi$ is positive definite}\}.
\end{equation*}
Thus $\eta_{\varphi}$ gives a Sasakian structure for $\varphi\in \Omega$.

\begin{prop}\label{LM}
We assume that  $[\Ric^{T}(d\eta)]_{B}=(2m+2)[d\eta]_{B}$.
For every $(\varphi,\varphi') \in \Omega \times \Omega$, we define functionals $L$, $M$ by
\begin{align*}
L(\varphi,\varphi')&=\frac{1}{V}\int_{a}^{b}\left(\int_{S}\dot{\varphi}_{t}(d\eta_{\varphi_{t}})^{m}\wedge\eta_{\varphi_{t}}\right)dt\\
M(\varphi,\varphi')&=-\frac{1}{V}\int_{a}^{b}\left\{\int_{S}\dot{\varphi}_{t}(s^{T}(d\eta_{\varphi_{t}})-m(2m+2))
	(d\eta_{\varphi_{t}})^{m}\wedge\eta_{\varphi_{t}}\right\}dt,
\end{align*}
where  $V=\int_{S}(d\eta)^{m}\wedge\eta$ and $\{\varphi_{t}\;\vert\; a \leq t \leq b\}$ is an arbitrary piecewise smooth path in $\Omega$ such that $\varphi=\varphi_{a},\;\varphi'=\varphi_{b}$.
Then $L,M$ are independent of the choice of the path $\{\varphi_{t}\;\vert\; a \leq t \leq b\}$, therefore well-defined.
Moreover, $L,M$ satisfy the $1$-cocycle condition, and for all $C_{1},C_{2} \in \mathbb{R}$
\begin{align*}
L(\varphi,\varphi'+C_{2})&=L(\varphi,\varphi')+C_{2}\\
M(\varphi+C_{1},\varphi'+C_{2})&=M(\varphi,\varphi').
\end{align*}
\end{prop}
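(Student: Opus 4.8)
The plan is to prove the two key properties separately: path-independence (which is equivalent to the $1$-cocycle condition and the well-definedness) and the explicit behavior under adding constants. I would treat $L$ first, as it is simpler, and then adapt the argument to $M$.

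For path-independence of $L$, the natural strategy is to show that the integrand $\int_S \dot\varphi_t\,(d\eta_{\varphi_t})^m\wedge\eta_{\varphi_t}\,dt$ is a closed $1$-form on the infinite-dimensional manifold $\Omega$, so that its integral depends only on the endpoints. Concretely, I would take a two-parameter family $\varphi_{t,s}$ and compute the mixed derivative: differentiate the integrand with respect to the second parameter and check that the result is symmetric in the two derivatives. Writing $\dot\varphi = \partial_t\varphi$ and $\varphi' = \partial_s\varphi$, I expect that
\begin{equation*}
\frac{\partial}{\partial s}\int_S \dot\varphi\,(d\eta_\varphi)^m\wedge\eta_\varphi
\end{equation*}
is symmetric under exchanging $\dot\varphi \leftrightarrow \varphi'$. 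The essential input is that $\frac{d}{dt}(d\eta_{\varphi_t}) = d\,d_B^c\dot\varphi_t = d_Bd_B^c\dot\varphi_t$ and $\frac{d}{dt}\eta_{\varphi_t} = d_B^c\dot\varphi_t$. Expanding and repeatedly applying the integration-by-parts identity quoted before Lemma \ref{volume} (which holds for basic forms against $\eta$ by Stokes) should move derivatives around until the symmetry is manifest; the terms involving $\varphi'$ wedged against $\eta$ versus against $d_B^c\varphi'$ are handled exactly as in the proof of Lemma \ref{volume}, where basic $(2m+1)$-forms vanish for dimension reasons. Once closedness is established, the $1$-cocycle condition $L(\varphi,\varphi') + L(\varphi',\varphi'') = L(\varphi,\varphi'')$ is immediate by concatenating paths, and antisymmetry $L(\varphi',\varphi) = -L(\varphi,\varphi')$ follows by reversing orientation.

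For $M$, I would run the same closedness computation, but now the integrand carries the extra factor $s^T(d\eta_{\varphi_t}) - m(2m+2)$. The main additional work is controlling how the transverse scalar curvature $s^T$ varies along the path. Using $\Ric^T = -\sqrt{-1}\partial_B\bar\partial_B\log\det(g^T)$ from Definition \ref{def: Ric} and the relation $s^T = \tr_{d\eta_\varphi}\Ric^T(d\eta_\varphi)$, the first variation of $s^T$ produces Laplacian-type terms in $\dot\varphi$; the hypothesis $[\Ric^T(d\eta)]_B = (2m+2)[d\eta]_B$ together with the $\partial\bar\partial$-lemma (Proposition \ref{p-lem}) lets me write $\Ric^T(d\eta_\varphi) - (2m+2)d\eta_\varphi$ as $\sqrt{-1}\partial_B\bar\partial_B$ of a basic potential, which is what makes the curvature-weighted integrand behave like a closed form after integration by parts. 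I expect this variation-of-scalar-curvature bookkeeping to be the main obstacle, since it requires the most careful use of the basic Bochner–Kodaira machinery, but it is the direct transcription of the Kähler computation.

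Finally, the behavior under additive constants is a short direct computation. For $L(\varphi,\varphi'+C_2)$, I would choose the specific path $\varphi_t = \varphi'_t + (t-a)/(b-a)\cdot C_2$ translating the endpoint by the constant; since adding a constant does not change $d_B^c$ of a function, we have $d\eta_{\varphi_t}$ and $\eta_{\varphi_t}$ unchanged by the constant shift, while $\dot\varphi_t$ acquires the extra term $C_2/(b-a)$, whose integral contributes exactly $\frac{1}{V}\int_S C_2\,(d\eta_{\varphi'})^m\wedge\eta_{\varphi'} = C_2$ by Lemma \ref{volume}. The same path for $M$ gives the extra term $-\frac{1}{V}\int_S (C_1\text{-derivative})(s^T - m(2m+2))(d\eta)^m\wedge\eta$; here the key point is that $\int_S (s^T(d\eta_\varphi) - m(2m+2))(d\eta_\varphi)^m\wedge\eta_\varphi$ vanishes identically, because the total transverse scalar curvature integrates to $m(2m+2)V$ under the normalization $[\Ric^T] = (2m+2)[d\eta]_B$. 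That vanishing is precisely why $M$ is invariant under constant shifts at both endpoints, completing the proof.
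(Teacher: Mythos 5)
Your proposal is correct and is essentially the paper's own route: the paper offers no detailed argument for Proposition \ref{LM}, saying only that it follows by Mabuchi's K\"ahler computation (\cite{M1}) transplanted via the basic-form Stokes procedure of Lemma \ref{volume}, which is exactly what you spell out (closedness of the variational $1$-form on the convex set $\Omega$ via symmetry of mixed derivatives, the vanishing of basic $(2m+1)$-forms, path concatenation for the cocycle property, and the average identity $\int_{S}\left(s^{T}(d\eta_{\varphi})-m(2m+2)\right)(d\eta_{\varphi})^{m}\wedge\eta_{\varphi}=0$ for the constant-shift behavior of $M$). One cosmetic imprecision: closedness of the $M$-integrand does not actually require the normalization $[\Ric^{T}(d\eta)]_{B}=(2m+2)[d\eta]_{B}$ (any constant in place of $m(2m+2)$ changes the $1$-form by a multiple of the closed $L$-form), as that hypothesis is needed only to make $m(2m+2)$ the average transverse scalar curvature and hence to get $M(\varphi+C_{1},\varphi'+C_{2})=M(\varphi,\varphi')$ --- but this does not affect your argument.
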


\begin{prop}\label{IJ}
For every $(\varphi,\varphi') \in \Omega \times \Omega$, we define functionals $I$, $J$ by
\begin{align*}
I(\varphi,\varphi')&=\frac{1}{V}\int_{S}(\varphi'-\varphi)\left((d\eta_{\varphi})^{m}-(d\eta_{\varphi'})^{m}\right)\wedge \eta\\
J(\varphi,\varphi')&=\frac{1}{V}\int_{a}^{b}
	\left(\int_{S}\dot{\varphi}_{t}\left((d\eta_{\varphi})^{m}-(d\eta_{\varphi_{t}})^{m}\right)\wedge \eta\right)dt,
\end{align*}
where $V=\int_{S}(d\eta)^{m}\wedge\eta$ and $\{\varphi_{t}\;\vert\; a \leq t \leq b\}$ is an arbitrary piecewise smooth path in $\Omega$ such that $\varphi=\varphi_{a},\;\varphi'=\varphi_{b}$.
Then the following statements hold.
\begin{enumerate}
\item $J(\varphi,\varphi')=-L(\varphi,\varphi')+\frac{1}{V}\int_{S}(\varphi'-\varphi)(d\eta_{\varphi})^{m}\wedge\eta$, and
	$J$ is independent of the choice of the path.
\item The functional $J$ doesn't satisfy the $1$-cocycle condition, but satisfy
	\begin{equation*}
	J(\varphi,\varphi')+J(\varphi',\varphi'')
	=J(\varphi,\varphi'')-\frac{1}{V}\left(\int_{S}(\varphi''-\varphi')\left((d\eta_{\varphi})^{m}-(d\eta_{\varphi'})^{m}\right)\wedge \eta\right).
	\end{equation*}
\item Let $C$ be a constant, then
	\begin{align*}
	I(\varphi,\varphi'+C)&=I(\varphi,\varphi')\\
	J(\varphi,\varphi'+C)&=J(\varphi,\varphi').
	\end{align*}
\item Let $\{\varphi_{t}\}$ be a family of basic functions, then
	\begin{equation*}
	\frac{d}{dt}\left(I(\varphi,\varphi_{t})-J(\varphi,\varphi_{t})\right)
	=\frac{1}{V}\int_{S}(\varphi_{t}-\varphi)\left(\square_{\varphi_{t}}^{B}\frac{d}{dt}\varphi_{t}\right)
	(d\eta_{\varphi_{t}})^{m}\wedge\eta.
	\end{equation*}
\item $I,I-J,J$ are non-negative functionals on $\Omega$, and we have
	\begin{equation*}
	0\leq I(\varphi,\varphi')\leq (m+1)(I(\varphi,\varphi')-J(\varphi,\varphi')) \leq m I(\varphi,\varphi').
	\end{equation*}
\end{enumerate}
\end{prop}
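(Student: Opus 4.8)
The plan is to establish the five assertions in turn, in each case reducing matters to the Stokes-type integration by parts for basic forms from Section 3 together with the properties of $L$ and $M$ in Proposition \ref{LM}. For (1), I would split the integrand of $J$ as $\dot\varphi_t(d\eta_\varphi)^m\wedge\eta-\dot\varphi_t(d\eta_{\varphi_t})^m\wedge\eta$. In the first piece the factor $(d\eta_\varphi)^m\wedge\eta$ is independent of $t$, so integrating in $t$ and using $\int_a^b\dot\varphi_t\,dt=\varphi'-\varphi$ gives $\frac{1}{V}\int_S(\varphi'-\varphi)(d\eta_\varphi)^m\wedge\eta$. In the second piece, since $\dot\varphi_t(d\eta_{\varphi_t})^m$ is a basic $2m$-form, the identity $\int_S\gamma\wedge\eta=\int_S\gamma\wedge\eta_{\varphi_t}$ rewrites it as $L(\varphi,\varphi')$. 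This proves the stated formula, and since both terms on the right are path-independent (the $L$-term by Proposition \ref{LM}), $J$ is well-defined.

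For (2), I would substitute the formula of (1) into $J(\varphi,\varphi')+J(\varphi',\varphi'')-J(\varphi,\varphi'')$ and apply the cocycle identity $L(\varphi,\varphi')+L(\varphi',\varphi'')=L(\varphi,\varphi'')$; the $L$-terms cancel and the surviving boundary integrals collect into $-\frac{1}{V}\int_S(\varphi''-\varphi')((d\eta_\varphi)^m-(d\eta_{\varphi'})^m)\wedge\eta$, as claimed. Assertion (3) is then immediate: replacing $\varphi'$ by $\varphi'+C$ leaves $d\eta_{\varphi'}$ unchanged, and since Lemma \ref{volume} (applied as $\int_S(d\eta_{\varphi'})^m\wedge\eta=V$) makes the coefficient of $C$ vanish, $I$ is unaffected; for $J$ one combines this with $L(\varphi,\varphi'+C)=L(\varphi,\varphi')+C$ from Proposition \ref{LM}, and the two contributions of $C$ cancel.

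For (4), I would differentiate directly. By the fundamental theorem of calculus $\frac{d}{dt}J(\varphi,\varphi_t)=\frac{1}{V}\int_S\dot\varphi_t((d\eta_\varphi)^m-(d\eta_{\varphi_t})^m)\wedge\eta$, whereas differentiating $I(\varphi,\varphi_t)$ produces the same term together with $-\frac{m}{V}\int_S(\varphi_t-\varphi)(d\eta_{\varphi_t})^{m-1}\wedge\sqrt{-1}\partial_B\bar\partial_B\dot\varphi_t\wedge\eta$, using $\frac{d}{dt}(d\eta_{\varphi_t})^m=m(d\eta_{\varphi_t})^{m-1}\wedge\sqrt{-1}\partial_B\bar\partial_B\dot\varphi_t$. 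Subtracting and invoking the pointwise trace identity $m\,\alpha\wedge(d\eta_{\varphi_t})^{m-1}=(\tr_{g^T}\alpha)(d\eta_{\varphi_t})^m=-(\square^B_{\varphi_t}\dot\varphi_t)(d\eta_{\varphi_t})^m$ for $\alpha=\sqrt{-1}\partial_B\bar\partial_B\dot\varphi_t$ converts the remaining term into $\frac{1}{V}\int_S(\varphi_t-\varphi)(\square^B_{\varphi_t}\dot\varphi_t)(d\eta_{\varphi_t})^m\wedge\eta$.

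Finally, for (5), which I expect to be the main obstacle, I would evaluate everything along the linear path $\varphi_t=\varphi+t(\varphi'-\varphi)$, $t\in[0,1]$, writing $\psi=\varphi'-\varphi$, $\omega_0=d\eta_\varphi$, $\omega_1=d\eta_{\varphi'}$, so that $d\eta_{\varphi_t}=(1-t)\omega_0+t\omega_1$. Telescoping $\omega_0^m-\omega_1^m=-(\omega_1-\omega_0)\wedge\sum_{j+l=m-1}\omega_0^j\wedge\omega_1^l$ and integrating by parts once turns $I$ into $\sum_{j+l=m-1}c_{jl}$, where $c_{jl}=\frac{1}{V}\int_S\sqrt{-1}\partial_B\psi\wedge\bar\partial_B\psi\wedge\omega_0^j\wedge\omega_1^l\wedge\eta\ge0$, each term non-negative because $\sqrt{-1}\partial_B\psi\wedge\bar\partial_B\psi$ is a non-negative $(1,1)$-form and $\omega_0,\omega_1$ are positive; hence $I\ge0$. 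Integrating the formula of (4) along the same path and integrating by parts once more yields $I-J=\frac{m}{V}\int_0^1 t\int_S\sqrt{-1}\partial_B\psi\wedge\bar\partial_B\psi\wedge(d\eta_{\varphi_t})^{m-1}\wedge\eta\,dt$; expanding $(d\eta_{\varphi_t})^{m-1}$ by the binomial theorem and evaluating the Beta integrals $\int_0^1 t(1-t)^j t^{m-1-j}\,dt=\frac{(m-j)!\,j!}{(m+1)!}$ collapses the combinatorial factors to $I-J=\frac{1}{m+1}\sum_{j+l=m-1}(m-j)\,c_{jl}$. Since $1\le m-j\le m$ over the range of summation, comparison with $I=\sum_{j+l=m-1}c_{jl}$ gives $\frac{1}{m+1}I\le I-J\le\frac{m}{m+1}I$, equivalently $0\le I\le(m+1)(I-J)\le mI$; the non-negativity of $J$ then follows from the right-hand inequality. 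The delicate points are tracking the signs through each integration by parts and checking that the trace/Laplacian identity holds verbatim for the transverse metric $g^T$ wedged against $\eta$, which is precisely where the basic Stokes theorem of Section 3 enters.
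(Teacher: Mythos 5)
Your proposal is correct, and it follows exactly the route the paper itself prescribes: the paper gives no written proof of Proposition \ref{IJ}, stating only that it follows ``by a similar method as in the K\"ahler cases (see \cite{M1}) by applying the procedure in the proof of the lemma \ref{volume}'', which is precisely what you carry out --- the basic Stokes identities and the vanishing of basic $(2m+1)$-forms to pass between $\eta$ and $\eta_{\varphi}$, the cocycle property of $L$ for (1)--(3), the trace identity $\tr(\sqrt{-1}\partial_{B}\bar{\partial}_{B}u)=-\square^{B}u$ for (4), and the linear path with the telescoping and Beta-integral computation for (5). Your signs, the coefficient $\frac{m-j}{m+1}$, and the resulting inequalities all check out.
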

The propositions \ref{LM} and \ref{IJ} can be proved by a similar method as in the K\"ahler cases (see \cite{M1})
by applying 
the procedure in the proof of the lemma \ref{volume} in the secion 3.
\begin{defn}\label{Hami-holo-vec}
A complex vector field $X$ on a Sasakian manifold is called a Hamiltonian holomorphic vector field if
\begin{enumerate}
\item $d\pi_{\alpha}(X)$ is a holomorphic vector field on $V_{\alpha}$.
\item The basic function $u_{X}:=\sqrt{-1}\eta(X)$ satisfies
\begin{equation*}
\bar{\partial}_{B}u_{X}=-\frac{\sqrt{-1}}{2}i(X)d\eta.
\end{equation*}
Such a function $u_{X}$ is called a Hamiltonian function.
\end{enumerate}
\end{defn}

Let $(x,z^{1},\ldots,z^{m})$ be a foliation chart on $U_{\alpha}$.
Then we can write a Hamiltonian holomorphic vector field $X$ as
\begin{equation*}
X=\eta(X)\frac{\partial}{\partial x}+\sum_{i=1}^{m}X^{i}\frac{\partial}{\partial z^{i}}
-\eta\left(\sum_{i=1}^{m}X^{i}\frac{\partial}{\partial z^{i}}\right)\frac{\partial}{\partial x},
\end{equation*}
where 
\begin{equation*}
\tilde{X}=X+\sqrt{-1}\left(\eta(X)-\eta\left(\sum_{i=1}^{m}X^{i}\frac{\partial}{\partial z^{i}}\right)\right)r\frac{\partial}{\partial r}
\end{equation*}
is a holomorphic vector field on $C(S)$ (see \cite{F1}).

Since $0 \in \Omega$, we abuse a notation as 
\begin{equation*}
M(d\eta_{\varphi})=M(0,\varphi).
\end{equation*}
It is shown that $\varphi$ is a critical point of $M$ on $\Omega$ if and only if $d\eta_{\varphi}$ is a transversely K\"ahler-Einstein metric (see \cite{F1}).

\section{Monge-Amp\`ere equation}

We assume that
 $[\Ric^{T}(d\eta)]_{B}=(2m+2)[d\eta]_{B}$. 
 
Then it follows from the proposition \ref{p-lem} that there exists a function $h$ such that
\begin{align*}
&\Ric^{T}(d\eta)-(2m+2)d\eta=\sqrt{-1}\partial_{B}\bar{\partial}_{B}h\qquad\text{}\\
&\int_{S}(e^{h}-1)(d\eta)^{m}\wedge\eta=0.
\end{align*}
As in K\"ahler geometry,  the Ricci curvature of $d\eta_{u}=d\eta+\sqrt{-1}\partial_{B}\bar{\partial}_{B}u$ is given by
\begin{align*}
\Ric^{T}(d\eta_{u})
=&-\sqrt{-1}\partial_{B}\bar{\partial}_{B}\log(d\eta_{u})^{m}\\
=&-\sqrt{-1}\partial_{B}\bar{\partial}_{B}\log\left(\frac{(d\eta_{u})^{m}}{(d\eta)^{m}}\right)+\Ric^{T}(d\eta)\\
=&-\sqrt{-1}\partial_{B}\bar{\partial}_{B}\log\left(\frac{(d\eta_{u})^{m}}{(d\eta)^{m}}\right)\\
&\qquad	+\sqrt{-1}\partial_{B}\bar{\partial}_{B}(-(2m+2)u+h)+(2m+2)d\eta_{u}.
\end{align*}
Hence, $d\eta_{u}$ is a transversely K\"ahler-Einstein metric if and only if $d\eta_{u}$ satisfies the following  equation
\begin{align*}
-\sqrt{-1}\partial_{B}\bar{\partial}_{B}\log\left(\frac{(d\eta_{u})^{m}}{(d\eta)^{m}}\right)
+\sqrt{-1}\partial_{B}\bar{\partial}_{B}(-(2m+2)u+h)=0\\
\end{align*}
which is equivalent to the Monge-Amp\`ere equation,
\begin{align*}
\frac{(d\eta+\sqrt{-1}\partial_{B}\bar{\partial}_{B}u)^{m}}{(d\eta)^{m}}=\exp(-(2m+2)u+h)
\end{align*}
In order to prove the uniqueness of solutions, we consider two families of equations parametrized by $t \in [0,1]$: 
\begin{align}
\frac{(d\eta+\sqrt{-1}\partial_{B}\bar{\partial}_{B}u)^{m}}{(d\eta)^{m}}&=\exp(-t(2m+2)u+h)\label{s1}\\
\frac{(d\eta+\sqrt{-1}\partial_{B}\bar{\partial}_{B}u)^{m}}{(d\eta)^{m}}&=\exp(-t(2m+2)u-(2m+2)L(0,u)+h)\label{s2}.
\end{align}
For a solution $u$ of (\ref{s1}), $\displaystyle{u-\frac{1}{t+1}L(0,u)}$ is a solution of (\ref{s2}).
On the other hands, if $t>0$, for a solution $u$ of (\ref{s2}), $\displaystyle{u+\frac{1}{t}L(0,u)}$ is a solution of (\ref{s1}).
Therefore (\ref{s1}) and (\ref{s2}) are same for $t \in (0,1]$, but a difference occurs to $t=0$.
If $u$ is a solution of (\ref{s1}), then $u+(\text{constant})$ is also a solution for $t=0$, but not for $t>0$.
Sicne this is inconvenient to prove the uniqueness, we introduce
the equation (\ref{s2}) for this problem.
We set
\begin{align*}
I_{1}=&\{t \in [0,1]\;|\;\text{ the equation }\mbox{(\ref{s1}) has solutions for $t$}\}\\
I_{2}=&\{t \in [0,1]\;|\;\text{ the equation }\mbox{(\ref{s2}) has solutions for $t$}\}.
\end{align*}

If we prove $I_{1}$ is open and close, then there exists a solution for $t=1$ and 
this solution gives a transversely K\"ahler-Einstein metric.

We remark that a solution $u$ of (\ref{s1}) or (\ref{s2}) satisfies
\begin{gather*}
\Ric^{T}(d\eta_{u})=t(2m+2)d\eta_{u}+(1-t)(2m+2)d\eta\\
\therefore \Ric^{T}(d\eta_{u})\geq t(2m+2)d\eta_{u}.
\end{gather*}

From now on we always assume that $\Ric^{T}(d\eta)-(2m+2)d\eta=\sqrt{-1}\partial_{B}\bar{\partial}_{B}h$.

\subsection{Openness}

In this subsection, we shall prove that $I_{2}$ is open.

\begin{defn}
A Hamiltonian holomorphic vector field $X$ is called a normalized Hamiltonian holomorphic vector field
if the Hamiltonian function $u_{X}$ satisfies
\begin{equation*}
\int_{S}u_{X}e^{h}(d\eta)^{m}\wedge\eta=0.
\end{equation*}
\end{defn}

\begin{prop}[theorem 5.1 of \cite{F1}]\label{lap-ei}
Let $\square_{h}^{B}$ be the Laplacian with respect to Hermitian metric $\exp(h)d\eta$.
Then we have
\begin{enumerate}
\item The first eigenvalue of $\square_{h}^{B}$ is greater than or equal to $2m+2$.
\item $\Ker(\square_{h}^{B}-(2m+2))$ is isomorphic to
	$\{X$ $\vert$ normalized Hamiltonian holomorphic vector fields$\}$.
	The correspondence is given by
\begin{equation*}
u\mapsto u\xi+\sum(g^{T})^{i\bar{j}}\frac{\partial u}{\partial \bar{z}^{j}}\frac{\partial}{\partial z^{i}}
	+\eta\left(\sum(g^{T})^{i\bar{j}}\frac{\partial u}{\partial \bar{z}^{j}}\frac{\partial}{\partial z^{i}}\right)\xi.
\end{equation*}
\end{enumerate}
\end{prop}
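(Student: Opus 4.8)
The plan is to recognize this as the Sasakian transverse analogue of the Matsushima--Lichnerowicz eigenvalue estimate from K\"ahler geometry, and to prove it by a weighted Bochner argument carried out on the transverse K\"ahler structure. First I would make $\square_h^B$ precise: on basic functions it is the $\bar\partial_B$-Laplacian whose adjoint is taken with respect to the weighted Hermitian pairing
\begin{equation*}
\langle u,v\rangle_h=\int_S u\,\bar v\,e^{h}(d\eta)^m\wedge\eta ,
\end{equation*}
so that $\square_h^B$ is self-adjoint and non-negative with kernel the constants. The decisive observation is that the weight $e^{h}$ upgrades the merely cohomological normalization to a pointwise one: since $\Ric^T(d\eta)-(2m+2)d\eta=\sqrt{-1}\partial_B\bar\partial_B h$, the $(1,1)$-components satisfy $R^T_{i\bar j}-\partial_i\partial_{\bar j}h=(2m+2)g^T_{i\bar j}$, i.e. the transverse Bakry--\'Emery--Ricci tensor $\Ric^T-\mathrm{Hess}^T h$ equals $(2m+2)g^T$ exactly. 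This is precisely the input that makes the Bochner method produce the constant $2m+2$.

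Next I would establish the weighted Bochner identity. Because every object in sight is basic (independent of the leaf coordinate $x$) and $g^T$ is genuinely K\"ahler on each slice $V_\alpha$, the computation is formally identical to the K\"ahler one performed chartwise and then globalized. For a basic function $u$ consider the $(0,2)$-part of the transverse Hessian, $u_{\bar i\bar j}:=\nabla^T_{\bar i}\nabla^T_{\bar j}u$, which vanishes exactly when the transverse $(1,0)$-field $\grad^{1,0}_B u=\sum(g^T)^{i\bar j}(\partial u/\partial\bar z^j)(\partial/\partial z^i)$ is transversally holomorphic. Integrating $|u_{\bar i\bar j}|^2$ against $e^{h}(d\eta)^m\wedge\eta$, commuting covariant derivatives by the transverse Ricci identity and inserting $\Ric^T-\mathrm{Hess}^T h=(2m+2)g^T$, with every integration by parts legitimate through the Stokes-type formulas of Section 3 (each step multiplies by $\eta$ and discards a basic $(2m+1)$-form), I expect the identity
\begin{equation*}
\int_S|u_{\bar i\bar j}|^2\,e^{h}(d\eta)^m\wedge\eta
=\int_S\bigl(\square_h^B u-(2m+2)u\bigr)\,\overline{\square_h^B u}\;e^{h}(d\eta)^m\wedge\eta .
\end{equation*}
For an eigenfunction $\square_h^B u=\lambda u$, self-adjointness makes $\lambda$ real and this reads $0\le\int_S|u_{\bar i\bar j}|^2e^{h}(d\eta)^m\wedge\eta=\lambda(\lambda-(2m+2))\|u\|_h^2$, forcing $\lambda\ge 2m+2$ for every nonzero eigenvalue, which is part $(1)$.

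For part $(2)$, equality $\lambda=2m+2$ holds iff $u_{\bar i\bar j}=0$, that is iff $\grad^{1,0}_B u$ is transversally holomorphic, so $d\pi_\alpha$ of the field displayed in the statement is holomorphic on $V_\alpha$; this is the first defining condition of Definition \ref{Hami-holo-vec}. I would then verify the second condition $\bar\partial_B u_X=-\tfrac{\sqrt{-1}}{2}i(X)d\eta$ with $u_X=\sqrt{-1}\eta(X)$ by a direct contraction, using $\eta(X)=u$ and the definition of the gradient, so that $u\mapsto X$ lands among the Hamiltonian holomorphic vector fields; the normalization $\int_S u_Xe^{h}(d\eta)^m\wedge\eta=0$ is exactly orthogonality of $u$ to the constant kernel, so the image consists of the normalized fields. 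Conversely, for a normalized Hamiltonian holomorphic vector field its Hamiltonian function is basic, its $(1,0)$-gradient is transversally holomorphic, hence $u_{\bar i\bar j}=0$, and reading the Bochner identity backwards gives $\square_h^B u=(2m+2)u$; the two assignments are mutually inverse, yielding the isomorphism.

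The hard part is the weighted transverse Bochner identity itself: one must carry out the commutation of $\nabla^T$-derivatives with the correct transverse curvature term and track the weight $e^{h}$ through every integration by parts, checking at each stage that the boundary contributions are basic $(2m+1)$-forms and therefore vanish. A secondary subtlety is confirming, as in \cite{F1}, that transversal holomorphicity of $\grad^{1,0}_B u$ actually promotes the associated field $\tilde X$ on the cone $C(S)$ to a genuine holomorphic vector field, so that the correspondence matches Definition \ref{Hami-holo-vec} and not merely a transverse condition.
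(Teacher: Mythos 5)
Your argument is essentially correct, but a comparison with ``the paper's own proof'' is moot here: the paper does not prove this proposition at all --- it quotes it verbatim as Theorem 5.1 of \cite{F1} (Futaki--Ono--Wang). Your weighted Bochner argument is in substance the standard proof given in that reference: the pointwise identity $\Ric^{T}-\sqrt{-1}\partial_{B}\bar{\partial}_{B}h=(2m+2)\,g^{T}$ (transverse Bakry--\'Emery condition) is exactly the right input, the identity $\int_{S}|u_{\bar i\bar j}|^{2}e^{h}(d\eta)^{m}\wedge\eta=\int_{S}(\square_{h}^{B}u-(2m+2)u)\,\overline{\square_{h}^{B}u}\,e^{h}(d\eta)^{m}\wedge\eta$ gives $\lambda(\lambda-(2m+2))\geq 0$ for part (1), and the equality case $u_{\bar i\bar j}=0$ characterizes transversal holomorphy of $\grad_{B}^{1,0}u$ for part (2). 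It is worth noting that the same mechanism appears in the paper in a dual guise: in Proposition \ref{I-open} the author treats $\bar{\partial}_{B}\dot{u}$ as an element of $\Lambda^{m,1}(K_{V}^{-1})$ and invokes the twisted Kodaira--Akizuki--Nakano identity $\square^{\bar{\partial}}_{K_{V}^{-1},h}=\square^{\partial}_{K_{V}^{-1},h}+(2m+2)$, which packages precisely your weighted Bochner identity (the curvature of the weighted metric $e^{h}(d\eta)^{m}$ on $K_{V}^{-1}$ being $(2m+2)d\eta$); either formulation proves the proposition. Two small points to tidy: your verification that $\eta(X)=u$ tacitly uses the projection $X=u\xi+Z-\eta(Z)\xi$ with $Z=\sum(g^{T})^{i\bar{j}}u_{\bar{j}}\partial/\partial z^{i}$, consistent with Definition \ref{Hami-holo-vec} --- the plus sign in the displayed correspondence of the statement appears to be a typo, and with the stated $u_{X}=\sqrt{-1}\eta(X)$ there is also a factor-of-$\sqrt{-1}$ convention to fix once and for all; and your closing worry about lifting $\tilde{X}$ to a holomorphic field on $C(S)$ is not actually needed for this statement, since Definition \ref{Hami-holo-vec} only requires holomorphy of $d\pi_{\alpha}(X)$ on the local leaf spaces $V_{\alpha}$ together with the Hamiltonian equation (the cone lift is established separately in \cite{F1}).
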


Let $V$ be a open subset in $\mathbb{C}^{m}$ and $\exp(h)(d\eta)^{m}$ a Hermitian metric on the anti-canonical line bundle $K_{V}^{-1}$.
We denote by $\square_{K_{V}^{-1},h}^{\bar{\partial}}$ the Laplacian with respect to this metirc .
Let $R_{V,h}$ be the curvature of the canonical connection.
Then we have
\begin{equation*}
\sqrt{-1}R_{V,h}=(2m+2)d\eta.
\end{equation*}
By the Kodaira-Akitsuki-Nakano identity on $\Lambda^{m,1}(K_{V}^{-1})$, we have
\begin{equation*}
\square_{K_{V}^{-1},h}^{\bar{\partial}}=\square_{K_{V}^{-1},h}^{\partial}+(2m+2).
\end{equation*}

\begin{prop}\label{I-open}
The intervals $I_1$ and $I_2$ satisfy the followings,
\begin{enumerate}
\item[(i)]$0 \in I_{1}$, $I_{2}$.
\item[(ii)] $I_{2}$ is a open set in $[0,1)$.
\item [(iii)] If $S$ doesn't have non-trivial normalized Hamiltonian holomorphic vector fields, 
both $I_1$ and $I_{2}$ are open in a neighborhood of $1$.
\end{enumerate}
\end{prop}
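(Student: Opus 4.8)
The plan is to run the continuity method and establish openness by the implicit function theorem in basic H\"older spaces $C^{k,\alpha}_B(S)$. For $j=1,2$ regard the left-hand sides of the two equations as maps on the open set where $d\eta_u>0$: put $\mathcal{F}_1(u,t)=\log\frac{(d\eta_u)^m}{(d\eta)^m}+t(2m+2)u-h$ and $\mathcal{F}_2(u,t)=\mathcal{F}_1(u,t)+(2m+2)L(0,u)$, both viewed as $C^1$ maps $C^{2,\alpha}_B\times[0,1]\to C^{0,\alpha}_B$, so that $I_j$ is the set of $t$ admitting a zero. For (i), at $t=0$ equation (\ref{s1}) is the transverse Calabi conjecture $\frac{(d\eta_u)^m}{(d\eta)^m}=e^h$, whose compatibility condition $\int_S e^h(d\eta)^m\wedge\eta=V$ holds by the normalization of $h$ together with lemma \ref{volume}; invoking the transverse analogue of Yau's theorem (\cite{E1}) produces a solution, so $0\in I_1$, and the substitution $u\mapsto u-L(0,u)$ from the remark following (\ref{s2}) yields $0\in I_2$.

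The heart of the matter is the invertibility of the linearization. Differentiating $\log\frac{(d\eta_u)^m}{(d\eta)^m}$ gives the transverse trace $\mathrm{tr}_{d\eta_{u_0}}(\sqrt{-1}\partial_{B}\bar{\partial}_{B} v)=-\square^{B}_{u_0}v$, and differentiating $L(0,\cdot)$ gives $\frac{1}{V}\int_S v\,(d\eta_{u_0})^m\wedge\eta=:\bar{v}$; hence $D_u\mathcal{F}_2(u_0,t_0)v=-\square^{B}_{u_0}v+t_0(2m+2)v+(2m+2)\bar{v}$. This is a zeroth-order (in fact rank-one) perturbation of the transversely elliptic self-adjoint operator $\square^{B}_{u_0}$ on basic functions, so it is Fredholm of index zero and invertibility is equivalent to injectivity. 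Testing the kernel equation against $(d\eta_{u_0})^m\wedge\eta$ forces $\bar{v}=0$, and pairing with $v$ then gives $\|\bar{\partial}_{B} v\|^2=t_0(2m+2)\|v\|^2$ for a mean-zero $v$. The remark following (\ref{s2}) yields $\Ric^{T}(d\eta_{u_0})\ge t_0(2m+2)d\eta_{u_0}$, and for $t_0<1$ the strictly positive contribution $(1-t_0)(2m+2)d\eta$ together with compactness (which bounds $d\eta\ge\delta\,d\eta_{u_0}$) upgrades this to $\Ric^{T}(d\eta_{u_0})\ge K_{t_0}d\eta_{u_0}$ with $K_{t_0}>t_0(2m+2)$; the transverse Lichnerowicz estimate then gives $\lambda_1(\square^{B}_{u_0})\ge K_{t_0}>t_0(2m+2)$, forcing $v=0$. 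Thus $D_u\mathcal{F}_2$ is an isomorphism for every $t_0\in[0,1)$ and, by the implicit function theorem with transverse elliptic regularity, $I_2$ is open in $[0,1)$, proving (ii).

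For (iii) the same computation applies at $t_0=1$, where $\Ric^{T}(d\eta_{u_0})=(2m+2)d\eta_{u_0}$ so the Lichnerowicz bound becomes the borderline $\lambda_1(\square^{B}_{u_0})\ge 2m+2$. By the equality case of proposition \ref{lap-ei} (the transverse Matsushima--Lichnerowicz theorem), $\lambda_1=2m+2$ would produce a nontrivial normalized Hamiltonian holomorphic vector field; under the hypothesis that $S$ admits none, the inequality is strict, $\lambda_1>2m+2$, and the kernel argument above again gives injectivity of $D_u\mathcal{F}_1$ and $D_u\mathcal{F}_2$ at $t_0=1$ (for $\mathcal{F}_1$ the constant direction is controlled by the term $t_0(2m+2)=2m+2\ne0$). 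Since invertibility is an open condition and already holds for $t_0<1$, the implicit function theorem makes $I_1$ and $I_2$ open in a neighborhood of $1$.

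The main obstacle I expect is not the formal implicit-function scheme but securing its analytic inputs in the foliated setting: the transverse Lichnerowicz eigenvalue estimate $\Ric^{T}\ge K g^{T}\Rightarrow\lambda_1(\square^{B})\ge K$ with its sharp equality characterization in terms of Hamiltonian holomorphic vector fields, and the Fredholm and regularity theory for the transversely elliptic operator $\square^{B}_{u_0}$ on basic functions (self-adjointness, discrete spectrum, and $C^{2,\alpha}_B$ elliptic estimates transverse to the leaves). These rest on the basic Hodge theory of the Reeb foliation, and the genuinely delicate point is the borderline $t_0=1$, where everything hinges on converting the absence of Hamiltonian holomorphic symmetries into a strict spectral gap.
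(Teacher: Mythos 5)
Your proposal is correct and follows essentially the same route as the paper's own proof: the same maps $\Phi_{1},\Phi_{2}$ between basic H\"older spaces, the same linearizations $-\square_{u}^{B}+t(2m+2)$ (plus the mean term for (\ref{s2})), the mean-zero reduction killing the kernel at $t=0$, a strict gap above $t(2m+2)$ for $t<1$ coming from $\Ric^{T}(d\eta_{u})=t(2m+2)d\eta_{u}+(1-t)(2m+2)d\eta$ and compactness, and proposition \ref{lap-ei} to exclude the borderline eigenvalue $2m+2$ at $t=1$ in the absence of normalized Hamiltonian holomorphic vector fields. The only presentational difference is that where you invoke the transverse Lichnerowicz estimate as a known input, the paper proves exactly that eigenvalue bound inline via the Kodaira--Akizuki--Nakano identity applied to $\bar{\partial}_{B}\dot{u}$ viewed in $\Lambda^{m,1}(K_{V}^{-1})$.
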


\begin{proof}

At first we shall show (i).
The equation (8) admits a solution $u$ for $t=0$ by \cite{Y1} and \cite{E1}.
Thus $0\in I_1$.
For a solution $u$ of $I_{1}$ of $t=0$, $u-L(0,u)$ is a solution of $I_{2}$ of $t=0$.

Next we shall show (ii).
We define $\Phi_1$ for  (\ref{s1}) by 
\begin{align*}
&\Phi_{1}\colon\Omega\times I\rightarrow C^{0,\varepsilon}_{B}(S)\\ \\
\Phi_{1}(u,t)=\log&\left(\frac{(d\eta+\sqrt{-1}\partial_{B}\bar{\partial}_{B}u)^{m}}{(d\eta)^{m}}\right)+t(2m+2)u-h,
\end{align*}
where $u \in C^{2,\varepsilon}_{B}(S)$.
We also define $\Phi_2$ for (\ref{s2}) by 
\begin{gather*}
\Phi_{2}\colon\Omega\times I\rightarrow C^{0,\varepsilon}_{B}(S)\\ \\
\begin{split}
\Phi_{2}(u,t)=&\log\left(\frac{(d\eta+\sqrt{-1}\partial_{B}\bar{\partial}_{B}u)^{m}}{(d\eta)^{m}}\right)\\
&\qquad+t(2m+2)u+(2m+2)L(0,u)-h.
\end{split}
\end{gather*}
where $u \in C^{2,\varepsilon}_{B}(S)$.
By differentiating $\Phi_1$ and $\Phi_2$ in the $\dot{u}$ direction at $u$ for a fixed $t$ , we have
\begin{align*}
(d\Phi_{1})_{u}(\dot{u})&=-\square_{u}^{B}\dot{u}+t(2m+2)\dot{u}\\
(d\Phi_{2})_{u}(\dot{u})&=-\square_{u}^{B}\dot{u}+t(2m+2)\dot{u}+\frac{2m+2}{V}\int_{S}\dot{u}(d\eta_{u})^{m}\wedge\eta.
\end{align*}
Our discussion is divided into two cases : $t=0$ and $t\neq 0$.
\begin{enumerate}
\item In the case of  $t=0$.

Let $u$ be a solution of (\ref{s2}) and $\dot{u}\in\Ker (d\Phi_{2})_{u}$.
Then we have
\begin{equation*}
\square_{u}^{B}\dot{u}=\frac{2m+2}{V}\int_{S}\dot{u}(d\eta_{u})^{m}\wedge\eta.
\end{equation*}
When we integrate this in $S$, we have
\begin{align*}
0=&\int_{S}\square_{u}^{B}\dot{u}(d\eta_{u})^{m}\wedge\eta\\
=&\int_{S}\left(\frac{2m+2}{V}\int_{S}\dot{u}(d\eta_{u})^{m}\wedge\eta\right)(d\eta_{u})^{m}\wedge\eta\\
=&(2m+2)\int_{S}\dot{u}(d\eta_{u})^{m}\wedge\eta.
\end{align*}
Therefore we have
\begin{equation*}
\square_{u}^{B}\dot{u}=\frac{2m+2}{V}\int_{S}\dot{u}(d\eta_{u})^{m}\wedge\eta=0.
\end{equation*}
Hence $\dot{u}$ is a constant.
Since the integration is $0$, the constant is $0$.
Therefore $\dot{u}=0$.
From the implicit function theorem, there exists an open neighborhood of $0$ which is included in $I_{2}$.

\item In the case of $t\in (0,1]$.

Since (\ref{s2}) is not different from (\ref{s1}) for $t\in (0,1]$,  It suffices to prove that for $I_{1}$.
Let $u$ be a solution of (\ref{s1}) with $\dot{u}\in\Ker (d\Phi_{1})_{u}$.
Then we have
\begin{equation*}
\square_{u}^{B}\dot{u}=t(2m+2)\dot{u}.
\end{equation*}

We consider $\bar{\partial}_{B}\dot{u}$ to be an element of $\Lambda^{m,1}(K_{V}^{-1})$.
Then by the Kodaira-Akitsuki-Nakano identity, we have
\begin{equation*}
\square_{u}^{B}=\square_{K_{V}^{-1},u}^{\bar{\partial}}=\square_{K_{V}^{-1},u}^{\partial}+[\Ric^{T}(d\eta_{u}),\Lambda].
\end{equation*}
Therefore we have
\begin{align*}
\frac{1}{2m+2}t\Vert \bar{\partial}_{B}\dot{u}\Vert^{2}
=&(\square_{u}^{B}\bar{\partial}_{B}\dot{u},\bar{\partial}_{B}\dot{u})\\
=&(\square_{K_{V}^{-1},u}^{\partial}\bar{\partial}\dot{u},\bar{\partial}\dot{u})
	+([\Ric^{T}(d\eta_{u}),\Lambda]\bar{\partial}\dot{u},\bar{\partial}\dot{u})\\
=&(\square_{K_{V}^{-1},u}^{\partial}\bar{\partial}\dot{u},\bar{\partial}\dot{u})
	+(\Ric^{T}(d\eta_{u})\Lambda\bar{\partial}\dot{u},\bar{\partial}\dot{u}).
\end{align*}
In the case of $0<t<1$, there exists a positive constant $\varepsilon$ such that
\begin{equation*}
\Ric^{T}(d\eta_{u})> (t+\varepsilon)(2m+2)d\eta_{u}.
\end{equation*}
Since $(\square_{K_{V}^{-1},u}^{\partial}\bar{\partial}\dot{u},\bar{\partial}\dot{u})\geq 0$, we have
\begin{align*}
0\leq& t(2m+2)\Vert \bar{\partial}\dot{u}\Vert^{2}-(\Ric^{T}(d\eta_{u})\Lambda\bar{\partial}\dot{u},\bar{\partial}\dot{u})\\
<&t(2m+2)\Vert \bar{\partial}\dot{u}\Vert^{2}-((t+\varepsilon)(2m+2)L\Lambda\bar{\partial}\dot{u},\bar{\partial}\dot{u})\\
=&t(2m+2)\Vert \bar{\partial}\dot{u}\Vert^{2}-(t+\varepsilon)(2m+2)([L,\Lambda]\bar{\partial}\dot{u},\bar{\partial}\dot{u})\\
=&t(2m+2)\Vert \bar{\partial}\dot{u}\Vert^{2}-(t+\varepsilon)(2m+2)(\bar{\partial}\dot{u},\bar{\partial}\dot{u})\\
=&-\varepsilon(2m+2)\Vert \bar{\partial}\dot{u}\Vert^{2}<0
\end{align*}
Hence $\bar{\partial}\dot{u}=0$.
Thus $\dot{u}$ is a constant and we have
\begin{gather*}
0=-\square_{u}^{B}\dot{u}+t(2m+2)\dot{u}=t(2m+2)\dot{u}\\
\therefore \dot{u}=0.
\end{gather*}
By the implicit function theorem, $I_{1}\cap (0,1)$ is a open set.
Therefore $I_{2}\cap (0,1)$ is a open set.
\end{enumerate}

Finally we shall show (iii).
It also suffices to show that for $I_{1}$.
In the case of $t=1$, solutions are transversely K\"ahler-Einstein metrics.
If $S$ doesn't have non-trivial normalized Hamiltonian holomorphic vector fields, We have $\Ker(\square_{u}^{B}-(2m+2))=0$ by proposition \ref{lap-ei}.
Hence, by the implicit function theorem, $I_{1}$ is an open set in neighborhood of $1$.
Therefore $I_{2}$ is also an open set in neighborhood of $1$.
\end{proof}

\subsection{Estimates for closeness}
In this subsection, we shall obtain estimates to prove that $I_{2}$ is close.

By Yau \cite{Y1} and El-Kacimi \cite{E1},
if there exists a $C^0$-estimate of solutions of the Monge-Amp\`ere equation
\begin{equation*}
\sup_{S}\vert u\vert \leq C,
\end{equation*}
then we obtain a $C^{2,\varepsilon}$-estimate,
\begin{equation*}
\Vert u\Vert_{C^{2,\varepsilon}} \leq C'
\end{equation*}
where $C'$ is a constant which dosen't depend on $u$.
Afterward, it follows form the Ascoli-Arzel\`a theorem that $I_{2}$ is closed.

\begin{lem}\label{M-I-J}
Let $u_{t}$ be a $C^{\infty}$-solution of {\rm (\ref{s2})}.
Then we have
\begin{equation*}
\frac{dM(0,u_{t})}{dt}=-(2m+2)(1-t)\frac{d}{dt}(I(0,u_{t})-J(0,u_{t}))\leq 0.
\end{equation*}
\end{lem}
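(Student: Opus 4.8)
The plan is to prove the two assertions of the lemma separately: first the identity
$\frac{dM(0,u_t)}{dt}=-(2m+2)(1-t)\frac{d}{dt}(I(0,u_t)-J(0,u_t))$, which is a transversal adaptation of the standard Kähler computation, and then the sign $\le 0$, which is where the geometry enters. For the identity I would start from the first variation of $M$ read off from Proposition \ref{LM}, namely $\frac{d}{dt}M(0,u_t)=-\frac1V\int_S\dot u_t\,(s^T(d\eta_{u_t})-m(2m+2))(d\eta_{u_t})^m\wedge\eta$ (replacing $\eta_{u_t}$ by $\eta$ is legitimate by the remark preceding Lemma \ref{volume}, since the integrand is a basic $2m$-form). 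Next I would use the pointwise scalar-curvature identity $s^T(d\eta_{u_t})(d\eta_{u_t})^m=m\,\Ric^T(d\eta_{u_t})\wedge(d\eta_{u_t})^{m-1}$ together with the relation $\Ric^T(d\eta_{u_t})=t(2m+2)d\eta_{u_t}+(1-t)(2m+2)d\eta$ valid along solutions of (\ref{s2}) (the displayed consequence of the equations). A short cancellation then gives
\begin{equation*}
(s^T(d\eta_{u_t})-m(2m+2))(d\eta_{u_t})^m=-m(2m+2)(1-t)\,\sqrt{-1}\partial_B\bar\partial_B u_t\wedge(d\eta_{u_t})^{m-1}.
\end{equation*}

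Substituting this into $\frac{d}{dt}M$ expresses it as $\frac{m(2m+2)(1-t)}{V}\int_S\dot u_t\,\sqrt{-1}\partial_B\bar\partial_B u_t\wedge(d\eta_{u_t})^{m-1}\wedge\eta$. On the other side, Proposition \ref{IJ}(4) with $\varphi=0$ gives $\frac{d}{dt}(I-J)=\frac1V\int_S u_t\,\square^B_{u_t}\dot u_t\,(d\eta_{u_t})^m\wedge\eta$, and the identity $\square^B_{u_t}f\,(d\eta_{u_t})^m=-m\,\sqrt{-1}\partial_B\bar\partial_B f\wedge(d\eta_{u_t})^{m-1}$ converts this into $-\frac{m}{V}\int_S u_t\,\sqrt{-1}\partial_B\bar\partial_B\dot u_t\wedge(d\eta_{u_t})^{m-1}\wedge\eta$. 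The two sides are matched by the symmetry of the bilinear form $(\phi,\psi)\mapsto\int_S\phi\,\sqrt{-1}\partial_B\bar\partial_B\psi\wedge\Theta\wedge\eta$ for a closed basic form $\Theta=(d\eta_{u_t})^{m-1}$, which is exactly the integration-by-parts principle established before Lemma \ref{volume}. This yields the claimed equality; the $L(0,u)$ normalization term in (\ref{s2}) plays no role here, since only the first variation of $M$ and the Ricci relation are used.

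For the sign it suffices, since $1-t\ge 0$ on $[0,1]$, to show $\frac{d}{dt}(I-J)\ge 0$. The key is to feed in the equation: differentiating (\ref{s2}) in $t$ gives $\square^B_{u_t}\dot u_t=(2m+2)u_t+t(2m+2)\dot u_t+c_t$, where $c_t=(2m+2)\frac{d}{dt}L(0,u_t)$ is spatially constant. Applying $\bar\partial_B$ (which annihilates $c_t$ and commutes with the Dolbeault Laplacian, $\bar\partial_B\square^B=\square^B_{(1)}\bar\partial_B$) and pairing with $\beta:=\bar\partial_B\dot u_t$, using $\bar\partial_B\beta=0$ so that $(\square^B_{(1)}\beta,\beta)=\Vert\bar\partial_B^*\beta\Vert^2=\Vert\square^B_{u_t}\dot u_t\Vert^2$, produces the clean relation
\begin{equation*}
(2m+2)(\bar\partial_B u_t,\bar\partial_B\dot u_t)=\Vert\square^B_{u_t}\dot u_t\Vert^2-t(2m+2)\Vert\bar\partial_B\dot u_t\Vert^2.
\end{equation*}
Since $\frac{d}{dt}(I-J)=\frac1V(\bar\partial_B u_t,\bar\partial_B\dot u_t)$ by self-adjointness of $\square^B_{u_t}$, it remains to see the right-hand side is nonnegative, i.e.\ that the first nonzero eigenvalue of $\square^B_{u_t}$ is at least $t(2m+2)$. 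This is the transversal Lichnerowicz estimate, which follows from $\Ric^T(d\eta_{u_t})\ge t(2m+2)d\eta_{u_t}$ by the same Bochner--Kodaira--Nakano argument already used in the proof of Proposition \ref{I-open}. I expect this eigenvalue lower bound to be the main obstacle: it is the only genuinely geometric input, it is exactly where the curvature positivity along the continuity path is consumed, and one must be careful that it holds for the varying transverse metric $d\eta_{u_t}$ (not merely for the background $e^h d\eta$ of Proposition \ref{lap-ei}). Granting it, the spectral decomposition of $\dot u_t$ gives $\Vert\square^B_{u_t}\dot u_t\Vert^2\ge t(2m+2)\Vert\bar\partial_B\dot u_t\Vert^2$, hence $\frac{d}{dt}(I-J)\ge 0$ and $\frac{dM}{dt}\le 0$, completing the proof.
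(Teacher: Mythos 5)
Your proposal is correct and follows essentially the same route as the paper: your wedge-product identities for the first part are just the trace relation $s^{T}(d\eta_{u_{t}})-m(2m+2)=(2m+2)(1-t)\square^{B}_{u_{t}}u_{t}$ that the paper combines with self-adjointness of $\square^{B}_{u_{t}}$ and Proposition \ref{IJ}(4), and for the sign you differentiate (\ref{s2}) and pair with $\bar{\partial}_{B}\dot{u}_{t}$ exactly as the paper does. The only cosmetic difference is that you package the conclusion as a transversal Lichnerowicz eigenvalue bound $\lambda_{1}(\square^{B}_{u_{t}})\geq t(2m+2)$ plus spectral decomposition, whereas the paper bounds $\left(\bar{\partial}_{B}\dot{u}_{t},\,(\square^{B}_{u_{t}}-t(2m+2))\bar{\partial}_{B}\dot{u}_{t}\right)\geq 0$ directly via the Kodaira--Akizuki--Nakano identity on $\Lambda^{m,1}(K_{V}^{-1})$ with $\Ric^{T}(d\eta_{u_{t}})\geq t(2m+2)d\eta_{u_{t}}$ --- the same geometric input, which you correctly identify as the crux and which indeed holds for the varying metric along the path.
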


\begin{proof}
Let $\eta_{t}=\eta+d_{B}^{c}u_{t}$.
By the definition of $h$ and (\ref{s2}), we have
\begin{gather*}
\Ric^{T}(d\eta_{t})=(2m+2)d\eta_{t}-\sqrt{-1}(2m+2)(1-t)\partial_{B}\bar{\partial}_{B}u_{t}\\
\end{gather*}
By taking the trace with respect to the transversal K\"ahler form $d\eta_t$, we have 
\begin{equation*}
s^{T}(d\eta_{t})-m(2m+2)=(2m+2)(1-t)\square_{u_{t}}^{B}u_{t}.
\end{equation*}
Hence we obtain
\begin{align*}
\frac{dM(0,u_{t})}{dt}
=&-\frac{1}{V}\int_{S}\dot{u}_{t}(s^{T}(d\eta_{t})-m(2m+2))(d\eta_{t})^{m}\wedge\eta\\
=&-\frac{1}{V}\int_{S}\dot{u}_{t}(2m+2)(1-t)\square_{u_{t}}^{B}u_{t}(d\eta_{t})^{m}\wedge\eta\\
=&-(2m+2)(1-t)\frac{1}{V}\int_{S}u_{t}\square_{u_{t}}^{B}\dot{u}_{t}(d\eta_{t})^{m}\wedge\eta.
\end{align*}
Therefore the first equation of this lemma is proved by proposition \ref{IJ}.

We take a logarithm of (\ref{s2}) and differentiate by $t$.
Then we have
\begin{equation*}
-\square^{B}_{u_{t}}\dot{u}_{t}=-(2m+2)\left(u_{t}+t\dot{u}_{t}+\frac{1}{V}\int_{S}\dot{u_{t}}(d\eta_{t})^{m}\wedge\eta\right).
\end{equation*}
Therefore we obtain
\begin{align*}
&(2m+2)\frac{d}{dt}(I(0,u_{t})-J(0,u_{t}))\\
=&\frac{(2m+2)}{V}\int_{S}\dot{u}_{t}\square_{u_{t}}^{B}u_{t}(d\eta_{t})^{m}\wedge\eta\\
=&\frac{1}{V}\int_{S}\dot{u}_{t}\square_{u_{t}}^{B}
	\left(\square^{B}_{u_{t}}\dot{u}_{t}-t(2m+2)\dot{u}_{t}-\frac{(2m+2)}{V}\int_{S}\dot{u_{t}}(d\eta_{t})^{m}\wedge\eta\right)
	(d\eta_{t})^{m}\wedge\eta\\
=&\frac{1}{V}\int_{S}\dot{u}_{t}\bar{\partial}_{B}^{*}\bar{\partial}_{B}
	\left(\bar{\partial}_{B}^{*}\bar{\partial}_{B}\dot{u}_{t}-t(2m+2)\dot{u}_{t}\right)(d\eta_{t})^{m}\wedge\eta\\
=&\frac{1}{V}\int_{S}
	\left(\bar{\partial}_{B}\dot{u}_{t},\square_{u_{t}}^{B}\bar{\partial}_{B}\dot{u}_{t}-t(2m+2)\bar{\partial}_{B}\dot{u}_{t}\right)
	(d\eta_{t})^{m}\wedge\eta.
\end{align*}
Here we assume $\bar{\partial}_{B}\dot{u}_{t}$ as an element of $A^{m,1}(K_{V}^{-1})$ as in proposition \ref{I-open}.
Then we have
\begin{align*}
&\left(\bar{\partial}_{B}\dot{u}_{t},\square_{u_{t}}^{B}\bar{\partial}_{B}\dot{u}_{t}-t(2m+2)\bar{\partial}_{B}\dot{u}_{t}\right)\\
=&\left(\square_{K_{V}^{-1},u_{t}}^{\partial}\bar{\partial}\dot{u}_{t},\bar{\partial}\dot{u}_{t}\right)
	+\left(([\Ric^{T}(d\eta_{u_{t}}),\Lambda]-(2m+2)t)\bar{\partial}\dot{u}_{t},\bar{\partial}\dot{u}_{t}\right)\\
\geq&\left(\square_{K_{V}^{-1},u_{t}}^{\partial}\bar{\partial}\dot{u}_{t},\bar{\partial}\dot{u}_{t}\right)
	+(2m+2)\left([tL,\Lambda]-t)\bar{\partial}\dot{u}_{t},\bar{\partial}\dot{u}_{t}\right)\\
\geq&0.
\end{align*}
Therefore we obtain
\begin{equation*}
\frac{dM(0,u_{t})}{dt}=-(2m+2)(1-t)\frac{d}{dt}(I(0,u_{t})-J(0,u_{t}))\leq 0.
\end{equation*}
\end{proof}
The following is a well known fact on the Green function on compact Riemannian manifolds

\begin{fac}[\cite{N1}]\label{Green}
Let $(S,g)$ be a compact Riemannian manifold of dimension $2m+1$.
Then there exists a Green function $G(x,y)$ which satisfyies  
\begin{align*}
u(x)=&\frac{1}{\Vol(S,g)}\int_{S}u(y)dV_{g}(y)+\int_{S}G(x,y)(\triangle u)(y)dV_{g}(y)\\
\text{for all } u\in C^{\infty}(S) \text{  and }\\
&\int_S G(x,y) dV_g(y)=0,
\end{align*}
where $\triangle$ is the Lapacian and $dV_g$ is the volume form.
In addition, we assume
\begin{equation*}
\diam(X,g)^{2}\Ric(g)\geq -(m-1)\varepsilon^{2}g
\end{equation*}
for a constant $\varepsilon \geq 0$.
Then there exists a constant $\gamma(m,\varepsilon)$ which depends on only $m$ and $\varepsilon$
and we have
\begin{equation*}
G(x,y)\geq -\gamma(m,\varepsilon)\frac{\diam(S,g)^{2}}{\Vol(S,g)}
\end{equation*}
for the Green function of $(S,g)$.
\end{fac}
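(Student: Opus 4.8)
The statement splits into an existence part---the Green representation formula together with the normalization $\int_S G(x,\cdot)\,dV_g=0$---and a quantitative lower bound under the diameter-normalized Ricci condition. The plan is to build $G$ by spectral theory and then extract the lower bound from heat-kernel estimates.

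For existence I would start from the discrete spectrum $0=\lambda_0<\lambda_1\le\lambda_2\le\cdots$ of $\triangle$ on the compact manifold $(S,g)$, with an $L^2(dV_g)$-orthonormal basis of eigenfunctions $\{\phi_j\}$, where $\phi_0\equiv\Vol(S,g)^{-1/2}$, and set
\[
G(x,y)=\sum_{j\ge 1}\frac{1}{\lambda_j}\phi_j(x)\phi_j(y).
\]
Standard elliptic theory identifies this series with the integral kernel of the inverse of $\triangle$ on the $L^2$-orthogonal complement of the constants; it is smooth off the diagonal with the expected local singularity, hence integrable in $y$. Expanding an arbitrary $u\in C^{\infty}(S)$ as $u=\sum_j c_j\phi_j$ and using $\triangle u=\sum_{j\ge 1}\lambda_j c_j\phi_j$, the representation formula follows term by term, the constant mode $c_0\phi_0$ producing exactly the average $\Vol(S,g)^{-1}\int_S u\,dV_g$. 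Since $\int_S\phi_j\,dV_g=0$ for every $j\ge 1$, the normalization $\int_S G(x,y)\,dV_g(y)=0$ is immediate.

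For the lower bound I would pass to the heat kernel $p_t(x,y)=\sum_j e^{-\lambda_j t}\phi_j(x)\phi_j(y)$, which is everywhere positive by the maximum principle, and use
\[
G(x,y)=\int_0^{\infty}\Bigl(p_t(x,y)-\tfrac{1}{\Vol(S,g)}\Bigr)\,dt.
\]
Because the assertion is scale invariant---under $g\mapsto\lambda g$ both $G$ and $\diam(S,g)^2/\Vol(S,g)$ carry the same power of $\lambda$---I may rescale so that $\diam(S,g)=1$, after which the hypothesis reads simply $\Ric(g)\ge-(m-1)\varepsilon^2 g$. Splitting the integral at $t_0=1$, the part over $(0,t_0)$ is bounded below by $-t_0/\Vol(S,g)=-1/\Vol(S,g)$ using only $p_t>0$, while over $(t_0,\infty)$ I would estimate the spectral tail by $|p_t(x,y)-\Vol(S,g)^{-1}|\le \Vol(S,g)^{-1}C(m,\varepsilon)e^{-\lambda_1 t}$ (via Cauchy--Schwarz from the on-diagonal bound) and integrate.

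The heart of the argument, and the main obstacle, is supplying the two uniform inputs that force $\gamma$ to depend only on $m$ and $\varepsilon$: a Li--Yau Gaussian upper bound for $p_t$ at unit time---equivalently the on-diagonal control $p_1(x,x)\le C(m,\varepsilon)/\Vol(S,g)$ underlying the tail estimate above---and a positive lower bound $\lambda_1\ge c(m,\varepsilon)$ on the first nonzero eigenvalue, both valid under $\Ric\ge-(m-1)\varepsilon^2 g$ at unit diameter. These follow from the Li--Yau gradient estimate together with the Bishop--Gromov volume comparison, but importing them with explicit, scale-correct constants is the delicate step. Once they are in hand the two pieces combine to give $G(x,y)\ge-\gamma(m,\varepsilon)/\Vol(S,g)$ at unit diameter, and undoing the rescaling restores the factor $\diam(S,g)^2/\Vol(S,g)$, yielding the claimed bound.
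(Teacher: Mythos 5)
Your proposal is correct in outline, but there is nothing in the paper to compare it against: the statement is quoted as a Fact with a citation to Nakajima's textbook \cite{N1} (the analogous Green-function estimate is classical and goes back to the appendix of Bando--Mabuchi \cite{BM1}), and the paper supplies no proof of its own. On its merits, your argument is the standard spectral/heat-kernel route and the steps check out. The expansion $G(x,y)=\sum_{j\ge 1}\lambda_j^{-1}\phi_j(x)\phi_j(y)$ gives both the representation formula and the normalization $\int_S G(x,\cdot)\,dV_g=0$, provided $\triangle$ is taken with nonnegative spectrum; that is indeed the convention the paper uses downstream (in lemma \ref{basic Lap and Lap} and lemma \ref{osc-a}, where $\triangle_g=2\square^B$ and $\square^B_{d\eta}u_t\le m$), so your formula is the one actually invoked. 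Your scale-invariance check is right: under $g\mapsto\lambda g$ both $G$ and $\diam^2/\Vol$ scale by $\lambda^{1-n/2}$ with $n=2m+1$, so normalizing $\diam=1$ is legitimate. The split of $G(x,y)=\int_0^\infty\bigl(p_t(x,y)-\Vol^{-1}\bigr)dt$ at $t_0=1$ is sound: positivity of $p_t$ gives the bound $-\Vol^{-1}$ on $(0,1)$, and for $t\ge 1$ writing $e^{-\lambda_j t}\le e^{-\lambda_1(t-1)}e^{-\lambda_j}$ and applying Cauchy--Schwarz yields $\vert p_t(x,y)-\Vol^{-1}\vert\le e^{-\lambda_1(t-1)}\bigl((p_1(x,x)-\Vol^{-1})(p_1(y,y)-\Vol^{-1})\bigr)^{1/2}$, so the tail contributes at most $C(m,\varepsilon)/(\Vol\cdot\lambda_1)$. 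The two inputs you flag as the heart of the matter---the on-diagonal bound $p_1(x,x)\le C(m,\varepsilon)/\Vol$ at unit diameter (Li--Yau, using that $B(x,1)=S$ when $\diam=1$) and the spectral gap $\lambda_1\ge c(m,\varepsilon)$ (Li--Yau/Zhong--Yang type, via Bishop--Gromov)---are genuine theorems under $\Ric\ge-(n-1)\varepsilon^2 g$ with $\diam\le 1$, so modulo quoting them your argument closes; since the paper quotes the entire statement, relying on these named results is no less self-contained than the original. One harmless discrepancy worth noting: the paper's hypothesis carries the coefficient $(m-1)$ where the dimensionally natural constant is $n-1=2m$; since $-(m-1)\varepsilon^2\ge -2m\varepsilon^2$ the stated hypothesis is only stronger, your proof (with constants depending on $n$ and $\varepsilon$) covers it, and in the paper's sole application (proposition \ref{v-d} feeding into lemma \ref{osc-a}) only $\varepsilon=0$ is used anyway.
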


We remark that a solution $u$ of (\ref{s1}) or (\ref{s2}) satisfying
\begin{gather*}
\Ric^{T}(d\eta_{u})=t(2m+2)d\eta_{u}+(1-t)(2m+2)d\eta\\
\end{gather*}
Hence we have 
\begin{equation*}
 \Ric^{T}(d\eta_{u})\geq t(2m+2)d\eta_{u}.
\end{equation*}
We introduce a family of contact structures by the multiplication of positive constant $\mu$,
\begin{align}\label{mu-fmily of Sasakina}
\eta_{u,\mu}&=\mu^{-1}\eta_{u}\\
\xi_{\mu}&=\mu\xi
\end{align}
Then we see that $(\eta_{u,\mu},\xi_{\mu})$ gives a Sasakian structure with the metric 
$g_{u,\mu}$ on $S$.
The transversal metric $g^{T}_{u,\mu}$ is given by $g^{T}_{u,\mu}=\mu^{-1}g_{u}^{T}$.
The volume form of $g_{u,\mu}$ is given by
\begin{equation}\label{eq: volume}
\eta_{u,\mu}\wedge(d\eta_{u,\mu})^{m}=\mu^{-(m+1)}\eta_{u}\wedge(d\eta_{u})^{m}.
\end{equation}
Let  $\square_{u,\mu}$ be 
the Laplacian with the Green operator $G_{u,\mu}$ 
and $\Ric_{u,\mu}$ the Ricci tensor with respect to $g_{u,\mu}$.

\begin{prop}\label{v-d}
Let $(S,g)$ be a compact Sasakian manifold and $u$ a solution of (\ref{s1}) or (\ref{s2}).
If we set  $\mu=t^{-1}$, then we have estimates of the volume and the diameter with respect to the metric $g_{u,\mu}$,
\begin{align*}
&\Vol(S, g_{u,\mu})=t^{m+1}V\\
&\diam(S,g_{u,\mu}) \leq \pi
\end{align*}
where $V$ is the volume of $S$ with respect to $(d\eta)^{m}\wedge \eta$.
\end{prop}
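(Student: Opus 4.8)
The plan is to treat the two estimates separately: the volume identity is essentially bookkeeping with the scaling formulas already in hand, while the diameter bound is the substance and rests on the Myers theorem applied to the rescaled Sasakian metric.

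For the volume, I would simply integrate the scaling identity (\ref{eq: volume}) and invoke Lemma \ref{volume}. Since the Riemannian volume form of a Sasakian metric is a fixed universal multiple of $\eta\wedge(d\eta)^{m}$, the same multiple for every metric in play, computing $\Vol(S,g_{u,\mu})$ amounts to evaluating $\int_{S}\eta_{u,\mu}\wedge(d\eta_{u,\mu})^{m}$. By (\ref{eq: volume}) this equals $\mu^{-(m+1)}\int_{S}\eta_{u}\wedge(d\eta_{u})^{m}$, and Lemma \ref{volume} gives $\int_{S}(d\eta_{u})^{m}\wedge\eta_{u}=\int_{S}(d\eta)^{m}\wedge\eta=V$. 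Setting $\mu=t^{-1}$ yields $\Vol(S,g_{u,\mu})=\mu^{-(m+1)}V=t^{m+1}V$.

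For the diameter, the strategy is to show the Ricci lower bound $\Ric_{u,\mu}\geq 2m\,g_{u,\mu}$ and then quote Myers: $S$ is compact, hence complete, of dimension $n=2m+1$ with $n-1=2m$, so $\Ric_{u,\mu}\geq 2m\,g_{u,\mu}$ forces $\diam(S,g_{u,\mu})\leq\pi$. To obtain the bound I would use that $(S,\xi_{\mu},\eta_{u,\mu},g_{u,\mu})$ is again a genuine Sasakian structure, so Theorem \ref{R-R^{T}} applies to it verbatim with the same universal constants: on the contact bundle $D=\Ker\eta_{u,\mu}=\Ker\eta_{u}$,
\begin{equation*}
\Ric_{u,\mu}(X,Y)=\Ric^{T}(d\eta_{u,\mu})(X,Y)-2\,g_{u,\mu}(X,Y),
\end{equation*}
while $\Ric_{u,\mu}(X,\xi_{\mu})=2m\,\eta_{u,\mu}(X)=2m\,g_{u,\mu}(X,\xi_{\mu})$. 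The crucial observation is that the transverse Ricci form is scale-invariant, $\Ric^{T}(d\eta_{u,\mu})=\Ric^{T}(d\eta_{u})$ (rescaling $g_{u}^{T}$ to $g_{u,\mu}^{T}=\mu^{-1}g_{u}^{T}$ alters $\log\det g^{T}$ only by an additive constant), whereas the transverse metric genuinely rescales, $g_{u,\mu}|_{D}=\mu^{-1}g_{u}^{T}$. Feeding in the Monge-Amp\`ere consequence $\Ric^{T}(d\eta_{u})\geq t(2m+2)\,d\eta_{u}$ recorded above, read as $\Ric^{T}(d\eta_{u})\geq t(2m+2)\,g_{u}^{T}$ on $D$, I get on $D$
\begin{equation*}
\Ric_{u,\mu}\geq\bigl(t(2m+2)-2\mu^{-1}\bigr)g_{u}^{T},
\end{equation*}
and the choice $\mu=t^{-1}$ is exactly what balances the constants, since $t(2m+2)-2t=2mt=2m\mu^{-1}$, so the right-hand side is $2m\,\mu^{-1}g_{u}^{T}=2m\,g_{u,\mu}|_{D}$.

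Finally, because $TS=D\oplus L_{\xi_{\mu}}$ is $g_{u,\mu}$-orthogonal and $\eta_{u,\mu}$ vanishes on $D$, the cross terms $\Ric_{u,\mu}(X,\xi_{\mu})$ vanish for $X\in D$, so the bound on $D$ together with $\Ric_{u,\mu}(\xi_{\mu},\xi_{\mu})=2m=2m\,g_{u,\mu}(\xi_{\mu},\xi_{\mu})$ assembles into the full quadratic-form inequality $\Ric_{u,\mu}\geq 2m\,g_{u,\mu}$ on all of $TS$, completing the Myers step. The main obstacle is not any isolated computation but keeping the rescalings honest: one must use that the constant $2$ in Theorem \ref{R-R^{T}} is a universal feature of every Sasakian structure, so it carries over to the rescaled one unchanged, while simultaneously tracking that $\Ric^{T}$ is scale-invariant but $g^{T}$ is not. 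It is precisely this mismatch, tuned by $\mu=t^{-1}$, that upgrades the merely transverse positivity produced by the Monge-Amp\`ere equation into an honest lower Ricci bound on $S$ to which Myers applies.
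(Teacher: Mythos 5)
Your proposal is correct and follows essentially the same route as the paper: the volume identity via the scaling formula (\ref{eq: volume}) together with Lemma \ref{volume}, and the diameter bound via Theorem \ref{R-R^{T}}, the scale invariance of $\Ric^{T}$ against the rescaling $g^{T}_{u,\mu}=\mu^{-1}g^{T}_{u}$, the transverse positivity $\Ric^{T}(d\eta_{u})\geq t(2m+2)\,d\eta_{u}$, and the Myers theorem after choosing $\mu=t^{-1}$. Your explicit check that the cross terms $\Ric_{u,\mu}(X,\xi_{\mu})$ vanish for $X\in D$, so the blockwise bounds assemble into the full inequality $\Ric_{u,\mu}\geq 2m\,g_{u,\mu}$, makes precise a step the paper leaves implicit.
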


\begin{proof}
Since $\mu=t^{-1}$, lemma \ref{volume} yields, 
\begin{align*}
\Vol(S, g_{u,\mu})
&=\int_{S}(d\eta_{u,\mu})^{m}\wedge \eta_{u,\mu}=\mu^{-(m+1)}\int_{S}(d\eta_{u})^{m}\wedge \eta_{u}\\
&=t^{m+1}\int_{S}(d\eta)^{m}\wedge \eta\\
&=t^{m+1}V.
\end{align*}

By theorem \ref{R-R^{T}}, we have
\begin{equation*}
\Ric_{u,\mu}(X,Y)=\Ric^{T}_{u,\mu}(X,Y)-2g_{u,\mu}(X,Y)\qquad \forall X,Y \in \Ker\eta_{u,\mu},
\end{equation*}
and by definition of $\eta_{u,\mu}$,
\begin{equation*}
\mu g_{u,\mu}(X,Y)=g_{u}(X,Y)\qquad \forall X,Y \in \Ker\eta_{u,\mu}.
\end{equation*}
Since the transversal Ricci curvature is invariant under the multiplication by positive constant of a transversal metric,
thus $\Ric^{T}_{u,\mu}=\Ric^{T}_{u}$, for all $X,Y \in \Ker\eta_{u,\mu}$.
Then we  have
\begin{align*}
\Ric^{T}_{u,\mu}(X,Y) &=\Ric^{T}_{u}(X,Y)\\
&\geq t(2m+2)g^{T}_{u}(X,Y)\\
&=(2m+2)t\mu g^{T}_{u,\mu}(X,Y)\\
&=(2m+2)t\mu g_{u,\mu}(X,Y).
\end{align*}
Therefore we have
\begin{equation*}
\Ric_{u,\mu}(X,Y)\geq (2m+2)t\mu g_{u,\mu}(X,Y)-2g_{u,\mu}(X,Y)\qquad \forall X,Y \in \Ker\eta_{u,\mu}.
\end{equation*}
Since we set
\begin{equation*}
\mu=t^{-1}.
\end{equation*}
we have
\begin{equation*}
\Ric_{u,\mu}(X,Y) \geq 2m g_{u,\mu}(X,Y)\qquad \forall X,Y \in \Ker\eta_{u,\mu}.
\end{equation*}
It follows from theorem 2.4 that 
\begin{align*}
\Ric_{u,\mu}(X,\xi_\mu)=&2m\, \eta_{u,\mu}(X)\\
=&2m\, g_{u,\mu}(X,\xi_\mu),\qquad \forall X\in TS
\end{align*}
Therefore we obtain
\begin{equation*}
\Ric_{u,\mu} \geq 2m g_{u,\mu} \geq (m-1)g_{u,\mu}.
\end{equation*}
By the Myers theorem, we have
\begin{equation*}
\diam(S,g_{u,\mu}) \leq \pi.
\end{equation*}
\end{proof}

\begin{lem}\label{basic Lap and Lap}
Let $\triangle$ be a compact Sasakian manifold $(S, g, \eta, \xi)$ and $\triangle_g$ the Laplacian with respect to
$g$ on $S$. The transversal K\"ahler metric $d\eta$ on $S$ gives the basic Laplacian $\triangle^B_{d\eta}$
and the basic complex Laplacian $\square^B_{d\eta}$ on $S$.
Then we have 
\begin{equation*}
\triangle_g u= \triangle^B_{d\eta} u=2\square^B_{d\eta}u, 
\end{equation*}
for every basic function $u$ on $S$.
\end{lem}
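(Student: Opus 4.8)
The plan is to reduce both asserted equalities to computations in a foliation chart and then to invoke the transverse K\"ahler identity. I would treat the two equalities separately, since the first is a statement about the full Laplace--Beltrami operator of $g$, while the second is the basic analogue of the classical K\"ahler identity $\triangle_d=2\triangle_{\bar\partial}$.

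First I would fix a foliation chart $(x,z^1,\ldots,z^m)$ as in Definition \ref{def: foliation chart}, with $\xi=\partial/\partial x$. Since $\mathcal{L}_\xi\eta=0$ and $\eta(\xi)=1$, in such a chart $\eta=dx+\beta$, where $\beta=\sum_i(\beta_i\,dz^i+\beta_{\bar i}\,d\bar z^i)$ has coefficients depending only on $(z,\bar z)$. Because the metric splits orthogonally as $g=\eta\otimes\eta+g^T$ along $TS=L_\xi\oplus D$, with $g^T$ the transverse K\"ahler metric, $g$ takes the Kaluza--Klein form in which $g_{xx}=1$, the mixed components are the $\beta$'s, and the horizontal block is $g^T+\beta\otimes\beta$. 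A Schur-complement computation then gives $\det g=\det g^T$, so $\sqrt{\det g}$ is basic, and the inverse metric has basic components: $g^{\mu\nu}=(g^T)^{\mu\nu}$ on transverse indices and $g^{x\mu}=-(g^T)^{\mu\nu}\beta_\nu$, all independent of $x$.

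Next I would apply $\triangle_g u=-\frac{1}{\sqrt{\det g}}\partial_a(\sqrt{\det g}\,g^{ab}\partial_b u)$ to a basic $u$. Since $u$ is basic, $\partial_x u=0$, so only transverse derivatives $\partial_\mu u$ survive in $\partial_b u$. The term with $a=x$ is $-\frac{1}{\sqrt{\det g}}\partial_x(\sqrt{\det g}\,g^{x\mu}\partial_\mu u)$, and each factor inside — $\sqrt{\det g}$, $g^{x\mu}$, and $\partial_\mu u$ — is basic, hence $x$-independent, so $\partial_x$ annihilates it. What remains is exactly the Laplace--Beltrami operator of $g^T$ applied to $u$, namely $\triangle^B_{d\eta}u$; this gives the first equality. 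For the second, type considerations give $\triangle^B_{d\eta}u=d_B^*d_B u=\partial_B^*\partial_B u+\bar\partial_B^*\bar\partial_B u$ (the cross terms vanish since $\bar\partial_B u\in\Lambda_B^{0,1}$ and $\partial_B^*$ lowers holomorphic degree below zero, and symmetrically), while $\square^B_{d\eta}u=\bar\partial_B^*\bar\partial_B u$. The transverse K\"ahler identities $[\Lambda,\bar\partial_B]=-\sqrt{-1}\,\partial_B^*$ and $[\Lambda,\partial_B]=\sqrt{-1}\,\bar\partial_B^*$, which hold on each $V_\alpha$ and patch because the $g^T_\alpha$ are isometric on overlaps, yield $\partial_B^*\partial_B u=\bar\partial_B^*\bar\partial_B u$, hence $\triangle^B_{d\eta}u=2\bar\partial_B^*\bar\partial_B u=2\square^B_{d\eta}u$.

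I expect the main obstacle to be the bookkeeping that makes the first equality honest: one must confirm that the basic adjoints $\partial_B^*,\bar\partial_B^*$, defined via $g^T$ and integration against $\eta$, produce precisely the transverse Laplace--Beltrami operator on basic functions, so that the operator left over after the $a=x$ term drops out really is $\triangle^B_{d\eta}$. This is where one uses the factorization $\eta\wedge(d\eta)^m=\eta\wedge(\text{transverse volume})$ and the $\xi$-invariance of all the data; the vanishing of the off-diagonal contribution is the crux, and it rests entirely on $u$ being basic.
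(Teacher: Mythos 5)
Your proof is correct, but it reaches the first equality by a genuinely different route than the paper. The paper argues globally with Hodge stars: it records the relations $*(\eta\wedge\alpha)=*_{B}\alpha$ and $*\alpha=(-1)^{p}\eta\wedge *_{B}\alpha$ for a basic $p$-form $\alpha$, notes that $d\eta\wedge *_{B}du=0$ since it would be a basic $(2m+1)$-form, and then computes $\triangle_{g}u=-*d*du=-*_{B}d_{B}*_{B}d_{B}u=\triangle^{B}_{d\eta}u$ in three lines. You instead do a local Kaluza--Klein computation in a foliation chart: $\eta=dx+\beta$ with $\beta$ basic, the Schur complement giving $\det g=\det g^{T}$ and basic inverse-metric entries, and the coordinate formula for $\triangle_{g}$ losing all $\partial_{x}$ terms on a basic $u$; this computation is sound. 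What each approach buys: yours is elementary and makes transparent that everything rests on the $x$-independence of $\sqrt{\det g}$ and $g^{ab}$, but the step you flag as the crux --- identifying the basic adjoint $d_{B}^{*}$, defined via $g^{T}$ and the volume $(d\eta)^{m}\wedge\eta$, with the transverse codifferential $-*_{B}d_{B}*_{B}$ so that the surviving operator is literally $\triangle^{B}_{d\eta}$ --- is exactly what the paper's star identities dispatch automatically, since $*(\eta\wedge\alpha)=*_{B}\alpha$ ties the global adjoint to the transverse one at once; that identification does follow from the Section 3 integration-by-parts with $\eta$ and the vanishing of basic forms of degree exceeding $2m$, as you indicate, so the gap is fillable, though your version is no shorter once filled. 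Conversely, for the second equality the paper simply asserts ``as in K\"ahler geometry,'' whereas you actually supply the argument (the type decomposition $\triangle^{B}_{d\eta}u=\partial_{B}^{*}\partial_{B}u+\bar{\partial}_{B}^{*}\bar{\partial}_{B}u$ on functions, plus the transverse K\"ahler identities, which hold on each $V_{\alpha}$ and patch because the $g^{T}_{\alpha}$ are isometric on overlaps), so on that point your writeup is the more complete of the two.
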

\begin{proof}
As in K\"ahler geometry, we have $\triangle^B_{d\eta} u=2\square^B_{d\eta}u$. 
There is a relation between the Hodge star operator $*$ and the basic Hodge star operator $*_B$, 
\begin{align}
&*(\eta\wedge \alpha)=  *_{B} \alpha,\\
&*\alpha=(-1)^p\eta\wedge *_{B} \alpha
\end{align}
for a basic $p$-form $\alpha$.
Since $d\eta$ is a basic $2$-form, we have $d\eta\wedge *_{B}du =0$.
Then we obtain
\begin{align}
\triangle_{g} u =&-*d *du =-*d (-\eta\wedge *_{B}du)\\
=&-*(\eta\wedge d *_{B} du )\\
=&=-*_{B} d_{B} *_{B} d_{B}u = \triangle^{B} u
\end{align}
\end{proof}

We can estimate $u$ with the functional $I$.

\begin{lem}\label{osc-a}
Let $(S,g, \eta, \xi)$ be a compact Sasakian manifold and $u_{t}\;(0<t\leq 1)$ a family of  basic functions
with transversal K\"ahler form $d\eta_{u_t}$.
We assume $\Ric^{T}(d\eta_{u_{t}})\geq (2m+2)td\eta_{u_{t}}$.
Let $G_g$ be the Green function with respect to $g$ with a lower bound $\inf G_g\geq -K$.
Then there exists a constant $C$ which doesn't depend on $t$ such that
\begin{align*}
\osc_{S}u_{t}
=&\sup_{S}u_{t}-\inf_{S}u_{t}\\
\leq&I(0,u_{t})+2m\left(\frac{KV}{m!}+\frac{C}{t}\right)
\end{align*}
where $V$ is the volume of $S$ with respect to $(d\eta)^{m}\wedge\eta$.
\end{lem}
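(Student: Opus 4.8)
The plan is to bound $\sup_S u_t$ and $\inf_S u_t$ separately against two natural averages of $u_t$ and then reassemble the oscillation through the definition of $I$. I normalize the Riemannian volume form as $dV_g=\frac{1}{m!}(d\eta)^m\wedge\eta$, and set $\bar u_t=\frac1V\int_S u_t\,(d\eta)^m\wedge\eta$ and $\langle u_t\rangle_{u_t}=\frac1V\int_S u_t\,(d\eta_{u_t})^m\wedge\eta$. The first step is to record two one-sided pointwise Laplacian bounds coming from positivity. By Lemma \ref{basic Lap and Lap}, $\triangle_g u_t=2\square^B_{d\eta}u_t$, and since $\square^B_{d\eta}u_t=-\tr_{d\eta}(\sqrt{-1}\partial_B\bar\partial_B u_t)=m-\tr_{d\eta}(d\eta_{u_t})$, the positivity $d\eta_{u_t}>0$ yields
\[
2m-\triangle_g u_t=2\,\tr_{d\eta}(d\eta_{u_t})\geq 0 .
\]
Writing $g'=g_{u_t,\mu}$ with $\mu=t^{-1}$ and using $g^T_{u_t,\mu}=\mu^{-1}g^T_{u_t}$, the same computation on the rescaled side gives $\triangle_{g'}u_t=2\mu\,\square^B_{d\eta_{u_t}}u_t=2t^{-1}\big(\tr_{d\eta_{u_t}}(d\eta)-m\big)$, hence
\[
\triangle_{g'}u_t+\tfrac{2m}{t}=2t^{-1}\,\tr_{d\eta_{u_t}}(d\eta)\geq 0 .
\]

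For the supremum I would apply the Green representation of Fact \ref{Green} for the fixed metric $g$ at a maximum point $x_{\max}$. Using $\int_S G_g(x_{\max},\cdot)\,dV_g=0$ and $\int_S\triangle_g u_t\,dV_g=0$, the constant $2m$ can be inserted freely, so
\[
\sup_S u_t-\bar u_t=-\int_S G_g(x_{\max},y)\big(2m-\triangle_g u_t\big)\,dV_g .
\]
Now $2m-\triangle_g u_t\geq 0$ and $-G_g\leq K$ give the bound $K\int_S(2m-\triangle_g u_t)\,dV_g=2mK\,\Vol(S,g)=2m\,\frac{KV}{m!}$, where I used that $dV_g$ is a constant multiple of $(d\eta)^m\wedge\eta$ so that the $dV_g$-average of $u_t$ equals $\bar u_t$.

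The hard part is the infimum, where the fixed metric is useless: the hypothesis $\Ric^T(d\eta_{u_t})\geq (2m+2)t\,d\eta_{u_t}$ only constrains the transversal curvature, and by Theorem \ref{R-R^{T}} the Ricci curvature of $g_{u_t}$ carries a $-2g$ term that blocks any positive lower bound uniform as $t\to 0$. The remedy is the rescaling $\eta_{u_t,\mu}=\mu^{-1}\eta_{u_t}$ with $\mu=t^{-1}$: the transversal Ricci form is scale invariant, and this choice of $\mu$ balances the $-2g$ term so that $\Ric_{g'}\geq 2m\,g'$, whence $\diam(S,g')\leq\pi$ by Myers and $\Vol(S,g')=t^{m+1}V$, exactly as in Proposition \ref{v-d}. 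Since $\Ric_{g'}\geq 0$, I may take $\varepsilon=0$ in Fact \ref{Green} and obtain $G_{g'}\geq-\gamma(m,0)\,\pi^2/\Vol(S,g')$. Repeating the supremum manipulation at a minimum point, now with the bound $\triangle_{g'}u_t+\frac{2m}{t}\geq 0$, produces
\[
\langle u_t\rangle_{u_t}-\inf_S u_t\leq \gamma(m,0)\frac{\pi^2}{\Vol(S,g')}\cdot\frac{2m}{t}\,\Vol(S,g')=2m\,\frac{C}{t},\qquad C=\gamma(m,0)\pi^2 ,
\]
the crucial point being that the volume cancels, so the estimate is clean and independent of the normalization; here I also used that $u_t(d\eta_{u_t})^m$ is a basic $2m$-form, so the $dV_{g'}$-average of $u_t$ equals $\langle u_t\rangle_{u_t}$.

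Finally I would assemble the pieces. By the definition of $I$ in Proposition \ref{IJ}, $\bar u_t-\langle u_t\rangle_{u_t}=I(0,u_t)$, so
\[
\osc_S u_t=\big(\sup_S u_t-\bar u_t\big)+I(0,u_t)+\big(\langle u_t\rangle_{u_t}-\inf_S u_t\big)\leq I(0,u_t)+2m\Big(\frac{KV}{m!}+\frac{C}{t}\Big),
\]
which is the assertion. I expect the single genuine obstacle to be the infimum estimate: recognizing that the transversal Ricci bound must be converted into an honest Ricci bound by the Reeb rescaling $\mu=t^{-1}$, and that this rescaling is precisely what makes the volume cancel in the Green-function estimate.
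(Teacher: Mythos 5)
Your proposal is correct and takes essentially the same route as the paper's own proof: the upper bound for $\sup_S u_t$ via the Green function of the fixed metric $g$ together with $\triangle_g u_t=2\square^B_{d\eta}u_t\leq 2m$, the lower bound for $\inf_S u_t$ via the Green function of the rescaled metric $g_{u_t,\mu}$ with $\mu=t^{-1}$ (Proposition \ref{v-d}, Myers, Fact \ref{Green}, and $\square^B_{t,\mu}u_t\geq -m/t$), and the identification $\frac{1}{V}\int_S u_t\,(d\eta)^m\wedge\eta-\frac{1}{V}\int_S u_t\,(d\eta_{u_t})^m\wedge\eta=I(0,u_t)$, including the observation that $u_t(d\eta_{u_t})^m\wedge d^c_B u_t$ vanishes as a basic $(2m+1)$-form. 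The only differences are cosmetic: you evaluate at extremum points instead of writing the pointwise identities with $G+K$ inserted, and your constant $C=\gamma(m,0)\pi^2$ differs from the paper's by the harmless normalization factor $m!$ in the volume form.
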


\begin{proof}
Since $d\eta_{u_{t}}=d\eta+\sqrt{-1}\partial\bar{\partial}u_{t}$ is 
a transversal K\"ahler form, we have
\begin{equation*}
\square^{B}_{d\eta}u_{t}=\tr_{d\eta}(d\eta-d\eta_{u_{t}})\leq m.
\end{equation*}
Then applying the fact 5.5 and lemma \ref{basic Lap and Lap}, we have an upper bound of $u_t$
\begin{align}
u_{t}(x)
=&\frac{1}{V}\int_{S}u_{t}(d\eta)^{m}\wedge\eta+\int_{S}\left(G_g(x,y)+K\right)(\triangle_g u_{t})\frac{(d\eta)^{m}\wedge\eta}{m!}\\
=&\frac{1}{V}\int_{S}u_{t}(d\eta)^{m}\wedge\eta+\int_{S}\left(G_g(x,y)+K\right)(2\square^{B}_{d\eta}u_{t})\frac{(d\eta)^{m}\wedge\eta}{m!}\\
\leq&\frac{1}{V}\int_{S}u_{t}(d\eta)^{m}\wedge\eta+2mK\frac{V}{m!}.\label{eq: upper bound}
\end{align}
Let $\triangle_{t,\mu}$ be the Laplacian and
$G_{t,\mu}$ the Green function with respect to the Sasakian metric $g_{u_{t}, \mu}$ defined by 
(\ref{mu-fmily of Sasakina}).
By proposition \ref{v-d} and fact \ref{Green}, we have
\begin{equation}\label{estimae of Green}
G_{{{t},\mu}}\geq -\gamma(m,\varepsilon)\frac{\diam(S,g_{u_{t},\mu})^{2}}{\Vol(S,g_{u_{t},\mu})}
\geq -\gamma(m,0)\frac{\pi^{2}}{t^{m+1}V},
\end{equation}
where $\mu=t^{-1}$ as in proposition \ref{v-d}.
We denote by $2\square^B_{t,\mu}$ the basic complex Laplacian with respect to the transversal K\"ahler form 
$d\eta_{u_t, \mu}$.
Then it follows from lemma \ref{basic Lap and Lap} that 
$\triangle_{t,\mu} u_t =2\square^B_{t,\mu} u_t$.   
By $d\eta_{u_{t},\mu}=\mu^{-1}\left(d\eta+\sqrt{-1}\partial\bar{\partial}u_{t}\right)$, we have
\begin{equation}\label{estimate of 18}
\square^{B}_{t,\mu}u_{t}
=\mu\square^{B}_{t,\mu}\mu^{-1}u_{t}
=\mu\tr_{d\eta_{u_{t},\mu}}(d\eta_{\mu}-d\eta_{u_{t},\mu})\geq -mt^{-1}
\end{equation}
where $\eta_\mu=\mu^{-1}\eta$ and $\mu=t^{-1}$.
By applying the fact 5.5 to $(S, g_{u_t, \mu})$, we have 
\begin{align}
u_{t}(x)
=&\frac{1}{t^{m+1}V}\int_{S}u_{t}(d\eta_{u_{t},\mu})^{m}\wedge\eta_{u_{t},\mu}\label{eq: 20}\\
&+\int_{S}\left(G_{t,\mu}(x,y)+\gamma(m,0)\frac{\pi^{2}}{t^{m+1}V}\right)
	(\triangle_{t,\mu}u_{t})\frac{(d\eta_{u_{t},\mu})^{m}\wedge\eta_{u_{t},\mu}}{m!}\label{eq:  21}
\end{align}
By (\ref{eq: volume}), the first term (\ref{eq: 20}) is given by 
\begin{equation*}
\frac{1}{t^{m+1}V}\int_{S}u_{t}(d\eta_{u_{t},\mu})^{m}\wedge\eta_{u_{t},\mu}
=\frac{1}{V}\int_{S}u_{t}(d\eta_{u_{t}})^{m}\wedge\eta
\end{equation*}
By using  (\ref{estimae of Green}) and (\ref{estimate of 18}), we have an estimate of (\ref{eq: 21}),
\begin{align*}
&\int_{S}\left(G_{t,\mu}(x,y)+\gamma(m,0)\frac{\pi^{2}}{t^{m+1}V}\right)
	(\triangle_{t,\mu}u_{t})\frac{(d\eta_{u_{t},\mu})^{m}\wedge\eta_{u_{t},\mu}}{m!}\\
=&\int_{S}\left(G_{t,\mu}(x,y)+\gamma(m,0)\frac{\pi^{2}}{t^{m+1}V}\right)
	(2\square^{B}_{t,\mu}u_{t})\frac{(d\eta_{u_{t},\mu})^{m}\wedge\eta_{u_{t},\mu}}{m!}\\ \\
\geq&-\frac{2m}{t}\gamma(m,\varepsilon)\frac{\pi^{2}}{t^{m+1}V}\frac{t^{m+1}V}{m!}\\
=&-2m\gamma(m,0)\frac{\pi^{2}}{t(m!)}\\
\end{align*}
Thus we obtain
\begin{equation}\label{eq: lower bound}
u_{t}(x)\geq \frac{1}{V}\int_{S}u_{t}(d\eta_{u_{t}})^{m}\wedge\eta-2m\gamma(m,0)\frac{\pi^{2}}{t(m!)}
\end{equation}
From the upper bound (\ref{eq: upper bound}) and the lower bound (\ref{eq: lower bound}) 
we have the desired estimate
\begin{align*}
\osc_{S}u_{t}
=&\sup_{S}u_{t}-\inf_{S}u_{t}\\
\leq&I(0,u_{t})+2m\left(\frac{KV}{m!}+\frac{C}{t}\right).
\end{align*}
\end{proof}

\begin{lem}\label{I-c2}
Let $(S,g)$ be a compact Sasakian manifold.
If we have a constant $A$ such that
\begin{equation*}
I(0,u_{t})\leq A
\end{equation*}
for a solution $u_{t}$ of {\rm (\ref{s2})},
then we have a constant $B$ which doesn't depend on $t$ such that
\begin{equation*}
\Vert u_{t}\Vert_{C^{2,\varepsilon}}\leq B.
\end{equation*}
\end{lem}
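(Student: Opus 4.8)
The plan is to reduce the desired $C^{2,\varepsilon}$ bound to a uniform $C^0$ bound and then quote the Yau--El-Kacimi regularity result recalled at the start of this subsection, which upgrades $\sup_S|u_t|\le C_0$ into $\|u_t\|_{C^{2,\varepsilon}}\le B$. Thus the real content is to show that the hypothesis $I(0,u_t)\le A$ forces $\sup_S u_t$ and $-\inf_S u_t$ to be controlled by constants independent of $t$ (uniformly on compact subintervals of $(0,1]$, which is what the closedness argument for $I_2$ actually uses, since its relevant limit points stay bounded away from $0$). First I would feed $I(0,u_t)\le A$ into Lemma \ref{osc-a} to obtain
\begin{equation*}
\osc_S u_t\le A+2m\left(\frac{KV}{m!}+\frac{C}{t}\right).
\end{equation*}
This controls the spread of $u_t$ but is blind to adding constants, so the main task is to pin down the level of $u_t$.

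To fix the level I would integrate the Monge-Amp\`ere equation (\ref{s2}) against $(d\eta)^m\wedge\eta$. Using Lemma \ref{volume} in the form $\int_S(d\eta_{u_t})^m\wedge\eta=V$ and the normalization $\int_S e^h(d\eta)^m\wedge\eta=V$ of $h$, this yields the identity
\begin{equation*}
e^{(2m+2)L(0,u_t)}=\frac{1}{V}\int_S e^{-t(2m+2)u_t+h}(d\eta)^m\wedge\eta .
\end{equation*}
I would combine this with Proposition \ref{IJ}(1), which for $\varphi=0$ reads $L(0,u_t)=\bar u_t-J(0,u_t)$ where $\bar u_t=\frac1V\int_S u_t(d\eta)^m\wedge\eta$, and with the bounds $0\le J(0,u_t)\le I(0,u_t)\le A$ from Proposition \ref{IJ}(5). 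Bounding the integral above by $e^{-t(2m+2)\inf_S u_t}$ gives $L(0,u_t)\le -t\inf_S u_t$, while Jensen's inequality (convexity of $\exp$) bounds it below and gives $L(0,u_t)\ge -t\bar u_t+\bar h/(2m+2)$, with $\bar h=\frac1V\int_S h(d\eta)^m\wedge\eta$.

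Finally I would close the loop using the one-sided Green-function bounds (\ref{eq: upper bound}) and (\ref{eq: lower bound}) already produced inside the proof of Lemma \ref{osc-a}. The lower bound reads $\inf_S u_t\ge \bar u_t-A-C'/t$ (with $C'=2m\gamma(m,0)\pi^2/m!$), so $L(0,u_t)\le -t\inf_S u_t\le -t\bar u_t+tA+C'$; inserting $L(0,u_t)=\bar u_t-J(0,u_t)\ge \bar u_t-A$ wait-free of sign and using $t\le1$ yields $(1+t)\bar u_t\le 2A+C'$, hence an upper bound for $\bar u_t$; the Jensen estimate together with $J\ge0$ gives $(1+t)\bar u_t\ge \bar h/(2m+2)$, hence a constant lower bound for $\bar u_t$. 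Then the upper Green bound $\sup_S u_t\le \bar u_t+2mKV/m!$ controls $\sup_S u_t$, and $\inf_S u_t\ge \bar u_t-A-C'/t$ controls $\inf_S u_t$, giving the sought $C^0$ estimate and, via Yau--El-Kacimi, the $C^{2,\varepsilon}$ estimate. The main obstacle is precisely this level-fixing step: oscillation control is insensitive to constants, and one must exploit the built-in normalization $L(0,u_t)$ of equation (\ref{s2}) to remove that freedom. The secondary point to watch is the factor $C/t$ inherited from Lemma \ref{osc-a}: it cancels in the upper estimate for $\bar u_t$ after multiplication by $t$, but survives in the lower bound for $\inf_S u_t$, so $B$ is genuinely uniform only on compact subintervals of $(0,1]$, which suffices for the closedness of $I_2$.
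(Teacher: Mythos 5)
Your reduction to a $C^{0}$ bound, the integrated identity $e^{(2m+2)L(0,u_t)}=\frac{1}{V}\int_{S}e^{-t(2m+2)u_t+h}(d\eta)^{m}\wedge\eta$, and the level-pinning via Proposition \ref{IJ}(1), Jensen's inequality and the two one-sided Green bounds from the proof of Lemma \ref{osc-a} are all correct, and on any interval $[\delta,1]$ with $\delta>0$ they do yield the $C^{2,\varepsilon}$ bound via Yau--El-Kacimi. But there is a genuine gap: your final bound on $\inf_{S}u_t$ degenerates like $C/t$ as $t\to 0$, and your claim that ``uniformity on compact subintervals of $(0,1]$ suffices for the closedness of $I_{2}$'' is false for the way this lemma is actually used. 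In Theorem \ref{I-uni}(ii) the solution $u_{\tau}$ is extended backwards along the continuity path to all of $[0,\tau]$, and the uniqueness argument of Theorem \ref{I-uni}(i) is anchored precisely at the endpoint $t=0$, where $L(0,u_{0})=0$ singles out the unique Yau--El-Kacimi solution; uniqueness then propagates to $t=\tau$ by local uniqueness. So closedness at $t=0$ is exactly the case that matters, and it requires the constant $B$ to be uniform as $t\to 0^{+}$ --- which is what the lemma asserts and what your argument does not deliver.

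The missing idea is a bootstrap that removes the $1/t$. First multiply the oscillation bound of Lemma \ref{osc-a} by $t$: since $t\le 1$ and $I(0,u_t)\le A$, one gets $t\,\osc_{S}u_t\le C_{1}$ uniformly on $(0,1]$, the troublesome $C/t$ becoming $C$. Integrating (\ref{s2}) against $(d\eta)^{m}\wedge\eta$ produces a point $x_t$ with $-t(2m+2)u_t(x_t)-(2m+2)L(0,u_t)+h(x_t)=0$, whence for every $x$,
\begin{equation*}
\left\vert -t(2m+2)u_t(x)-(2m+2)L(0,u_t)+h(x)\right\vert \le t(2m+2)\osc_{S}u_t+2\sup_{S}\vert h\vert\le C_{2},
\end{equation*}
i.e.\ $\sup_{S}\bigl\vert\log\bigl((d\eta_{u_t})^{m}/(d\eta)^{m}\bigr)\bigr\vert\le C_{2}$ uniformly in $t$. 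Now Yau's $C^{0}$ (oscillation) estimate for the Monge--Amp\`ere equation with uniformly bounded right-hand side --- into which $t$ does not enter --- gives $\osc_{S}u_t\le C_{3}$ with no $1/t$ loss; this is the step your proposal lacks, since you take the oscillation only from the Green-function argument of Lemma \ref{osc-a}. The level is then pinned uniformly by $\vert L(0,u_t)-u_t(x_t)\vert\le\osc_{S}u_t$ (computing $L$ along the path $su_t$) together with $(2m+2)(1+t)\vert u_t(x_t)\vert\le(2m+2)\vert u_t(x_t)-L(0,u_t)\vert+\vert h(x_t)\vert$, giving $\sup_{S}\vert u_t\vert\le C_{6}$ on all of $(0,1]$, hence the $C^{2,\varepsilon}$ bound with $B$ truly independent of $t$. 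Your Jensen/cocycle normalization is a perfectly good substitute for this $x_t$-based level-pinning, but without the second pass through Yau's oscillation estimate the $1/t$ in the lower bound for $\inf_{S}u_t$ cannot be removed, and the lemma as stated --- and as needed at $t=0$ --- is not proved.
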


\begin{proof}
We denote by $C_{i}$ a constant which does not depend on $t$.

By lemma \ref{osc-a}, we have
\begin{equation*}
t\osc_{S}u_{t}\leq t\left(I(0,u_{t})+4m\left(\frac{KV}{m!}+\frac{C}{t}\right)\right)
\leq C_{1}.
\end{equation*}
When we integrate (\ref{s2}) on $S$, we have
\begin{align*}
&\int_{S}\exp\left(-t(2m+2)u_{t}-(2m+2)L(0,u_{t})+h\right)(d\eta)^{m}\wedge\eta\\
=&\int_{S}\frac{(d\eta+\sqrt{-1}\partial_{B}\bar{\partial}_{B}u_{t})^{m}}{(d\eta)^{m}}(d\eta)^{m}\wedge\eta\\
=&\int_{S}(d\eta+\sqrt{-1}\partial_{B}\bar{\partial}_{B}u_{t})^{m}\wedge\eta\\
=&\int_{S}(d\eta)^{m}\wedge\eta.
\end{align*}
Therefore there exists a point $x_{t}$ such that
\begin{equation}
-t(2m+2)u_{t}(x_{t})-(2m+2)L(0,u_{t})+h(x_{t})=0.\label{s3}
\end{equation}
Then for all $x \in S$, we have
\begin{align*}
&\vert -t(2m+2)u_{t}(x)-(2m+2)L(0,u_{t})+h(x)\vert\\
=&\vert t(2m+2)u_{t}(x_{t})-t(2m+2)u_{t}(x)-h(x_{t})+h(x)\vert\\
\leq& t\osc_{S}u_{t}+2\sup_{S}\vert h\vert\leq C_{2}.
\end{align*}
Hence we have
\begin{align*}
&\sup_{S}\left\vert \log\frac{(d\eta+\sqrt{-1}\partial_{B}\bar{\partial}_{B}u)^{m}}{(d\eta)^{m}}\right\vert\\
=&\sup_{S}\vert -t(2m+2)u_{t}-(2m+2)L(0,u_{t})+h \vert\\
\leq& C_{2}.
\end{align*}
Thus, by Yau \cite{Y1}, we have
\begin{equation*}
\osc_{S}u_{t}\leq C_{3}.
\end{equation*}
as in compact K\"ahler manifolds.

Next, we shall estimate $\sup_{S}\vert u_{t}\vert$.
We use a path $su_{t}$ to compute the functional $L$ for $0\leq s \leq 1$.
Then we have
\begin{align*}
&\vert L(0,u_{t})-u_{t}(x_{t}) \vert\\
=&\left\vert \frac{1}{V}\int_{0}^{1}\left(\int_{S}(u_{t}-u_{t}(x_{t}))(d\eta+s\sqrt{-1}\partial_{B}\bar{\partial}_{B}u_{t})^{m}\wedge\eta
	\right)ds \right\vert\\
\leq&\frac{1}{V}\int_{0}^{1}\left(\int_{S}\osc_{S}u_{t}(d\eta+s\sqrt{-1}\partial_{B}\bar{\partial}_{B}u_{t})^{m}\wedge\eta\right)ds\\
=&\osc_{S}u_{t}\\
\leq&C_{3}.
\end{align*}
By (\ref{s3}), we have
\begin{align*}
(2m+2)(1+t)\vert u_{t}(x_{t})\vert
=&\vert (2m+2)u_{t}(x_{t})-(2m+2)L(0,u_{t})+h(x_{t})\vert\\
=&(2m+2)\vert u_{t}(x_{t})-L(0,u_{t})\vert +\vert h(x_{t})\vert\\
\leq&C_{4}.
\end{align*}
Thus we have $\vert u_{t}(x_{t})\vert \leq C_{5}$.
Hence for all $x\in S$, we have
\begin{align*}
\vert u_{t}(x)\vert
=&\vert u_{t}(x)-u_{t}(x_{t})+u_{t}(x_{t})\vert\\
\leq&\osc_{S}u_{t}+\vert u_{t}(x_{t})\vert\\
\leq&C_{6}.
\end{align*}
Therefore we obtain
\begin{equation*}
\Vert u_{t}\Vert_{C^{2,\varepsilon}}\leq C_{7}.
\end{equation*}
\end{proof}

\section{Proof of main theorem}

First we prove the next proposition.

\begin{thm}\label{I-uni}
Let $(S,g)$ be a compact Sasakian manifold and $\tau \in (0,1)$.
\begin{enumerate}
\item[\text{{\rm(i)}}] For $t \in [0,\tau]$, if there exists smooth one-parameter families $u_{t}, u'_{t}$ of solutions of (\ref{s2})
	 , then $u_{t}=u'_{t}$.
\item[\text{{\rm(ii)}}] If there exists a solution $u_{\tau}$ of (\ref{s2}) at $t=\tau$,
	 $u_{\tau}$ uniquely extends to a smooth family $\{u_{t}\;|\;0\leq t \leq \tau\}$ of solutions of (\ref{s2}).
\item[\text{{\rm(iii)}}] If there exist two solutions $u_{\tau}, u'_{\tau}$ of (\ref{s2}) at $t=\tau$, then $u_{\tau}=u'_{\tau}$.
\end{enumerate}
\end{thm}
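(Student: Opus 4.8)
Statements (i) and (ii) together set up the machinery for (iii), and (iii) is really the payoff: it gives uniqueness at each fixed parameter value. The central tool throughout will be the monotonicity of the functional $M$ along solutions of (\ref{s2}), established in Lemma \ref{M-I-J}, combined with the non-negativity and convexity estimates for $I$, $J$, and $I-J$ from Proposition \ref{IJ}(5).

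\textbf{Proof of (i).} First I would observe that for $t\in(0,\tau]$ with $\tau<1$, the equation (\ref{s2}) coincides with (\ref{s1}), and the linearized operator $(d\Phi_1)_u$ has trivial kernel on the range $0<t<1$ by the strict positivity argument already carried out in the proof of Proposition \ref{I-open}(ii): the Bochner--Kodaira--Akitsuki--Nakano identity forces $\bar\partial\dot u=0$ whenever $\Ric^T(d\eta_u)>(t+\varepsilon)(2m+2)d\eta_u$, hence $\dot u$ is constant, hence zero. So if $u_t$ and $u'_t$ are two families of solutions agreeing at some parameter value, a connectedness/continuation argument shows they agree on all of $(0,\tau]$, and then by continuity at $t=0$. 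Alternatively, and more robustly, I would show directly that the difference of any two solutions at a fixed $t>0$ must vanish, using the strict convexity coming from the same positivity of $\Ric^T$; this avoids relying on one-parameter structure.

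\textbf{Proof of (ii).} Here I would run the implicit function theorem backwards from $t=\tau$. Since $\tau\in(0,1)$, the openness result (Proposition \ref{I-open}(ii)) guarantees that the set of parameters admitting solutions is open in $(0,1)$, and the $C^{2,\varepsilon}$ a priori estimate of Lemma \ref{I-c2} — which requires a uniform bound on $I(0,u_t)$ — gives closedness. The key subtlety is supplying the bound on $I(0,u_t)$: along the family, Lemma \ref{M-I-J} shows $M(0,u_t)$ is non-increasing in $t$, and the chain of inequalities $0\le I \le (m+1)(I-J)\le mI$ from Proposition \ref{IJ}(5) lets me convert monotonicity of $M$ into control of $I-J$ and hence of $I$. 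This yields a uniform bound, the a priori estimate applies, and the solution extends down to $t=0$; uniqueness of the extension follows from part (i).

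\textbf{Proof of (iii), the main obstacle.} This is where the real work lies. Given two solutions $u_\tau,u'_\tau$ at the \emph{same} parameter $\tau$, part (ii) extends each to a family down to $t=0$, where the solution of (\ref{s2}) is unique (the $t=0$ kernel computation in Proposition \ref{I-open}(ii) shows $\dot u$ must be a constant with zero integral, forcing rigidity). The strategy is then a Bando--Mabuchi-style argument: I would consider the functional $M$ (or the combination $I-J$) evaluated along both families and use its monotonicity together with the fact that both families share the same endpoint at $t=0$ to force $u_\tau=u'_\tau$. \textbf{The hard part will be} handling the possibility that the two extended families do not coincide on the entire interval $[0,\tau]$: I must rule out a bifurcation, which is exactly where the strict positivity $\Ric^T(d\eta_{u_t})\ge(2m+2)t\,d\eta_{u_t}$ with $t>0$ enters to kill the kernel of the linearization, so that by the uniqueness in (i) the two families, agreeing at $t=0$, must agree throughout and in particular at $t=\tau$. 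The delicate point is ensuring the a priori estimates are uniform across \emph{both} families simultaneously so that the continuation argument does not break down at an interior parameter.
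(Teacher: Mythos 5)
Your overall architecture matches the paper's: extend solutions down the continuity path using openness (Proposition \ref{I-open}) plus the a priori estimate of Lemma \ref{I-c2}, with the required uniform bound on $I(0,u_t)$ coming from Lemma \ref{M-I-J} (whose formula gives $\frac{d}{dt}\bigl(I(0,u_t)-J(0,u_t)\bigr)\geq 0$ for $t<1$, hence $(I-J)(0,u_t)\leq (I-J)(0,u_\tau)$ on $[0,\tau]$) together with $I\leq (m+1)(I-J)$ from Proposition \ref{IJ}; then deduce (iii) from (i) and (ii). The genuine gap is at the anchor $t=0$. In (i) you assume the two families ``agree at some parameter value,'' which is not part of the hypothesis, and in (iii) you justify uniqueness at $t=0$ by the kernel computation of Proposition \ref{I-open}. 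That computation only shows the linearization $(d\Phi_2)_u$ is injective, i.e.\ \emph{local} uniqueness near a given solution; it does not exclude two solutions at $t=0$ that are far apart, which is exactly what must be ruled out, since the families extended downward from $u_\tau$ and $u'_\tau$ have no a priori reason to land near each other at $t=0$. The paper closes this hole by a global argument: integrating (\ref{s2}) at $t=0$ over $S$ forces $L(0,u_0)=0$, so the equation reduces to the transverse Calabi equation $(d\eta_{u_0})^m=e^{h}(d\eta)^m$, whose solution is unique up to an additive constant by Yau and El Kacimi, and the normalization $L(0,u_0)=0$ fixes that constant. This global uniqueness at $t=0$, combined with local uniqueness at every $t\in[0,\tau]$ in an open--closed argument in the parameter, is what actually yields (i), and through it (iii). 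Without it your continuation argument has nothing to anchor to.

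A second, related flaw: the ``alternative, more robust'' route you propose for (i) --- showing directly that two solutions of (\ref{s1}) at a fixed $t>0$ coincide via the positivity $\Ric^T(d\eta_u)>(t+\varepsilon)(2m+2)d\eta_u$ --- is not available. The Kodaira--Akitsuki--Nakano argument in Proposition \ref{I-open} applies to $\bar{\partial}_B\dot{u}$ for $\dot{u}$ in the kernel of the linearization at one solution; it says nothing about the difference of two solutions at finite distance, for which the relevant Monge--Amp\`ere operators are based at different transversal metrics. If such a direct fixed-$t$ uniqueness argument existed, it would apply at $t=\tau$ itself and render (ii), the functionals $M$, $I$, $J$, and the entire Bando--Mabuchi continuity scheme superfluous. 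You should delete that claim and anchor the continuation at $t=0$ as described above.
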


\begin{proof}
\begin{enumerate}
\item[(i)]
By proof of proposition \ref{I-open}, the solution of (\ref{s2}) is locally unique.
Therefore we shall show that $u_{0}$ is unique.
Since $u_{0}$ is a solution of (\ref{s2}) we have
\begin{equation*}
\int_{S}(d\eta)^{m}\wedge\eta=\int_{S}(d\eta_{u_{0}})^{m}\wedge\eta
=\exp(-(2m+2)L(0,u_{0}))\int_{S}e^{h}(d\eta)^{m}\wedge\eta.
\end{equation*}
By definition of $h$, we have
\begin{equation*}
\int_{S}(e^{h}-1)(d\eta)^{m}\wedge\eta=0
\end{equation*}
Therefore we obtain
\begin{equation*}
L(0,u_{0})=0.
\end{equation*}
Then there exist a solution which is unique up to an additive constant by \cite{Y1} and \cite{E1}.
The condition $L(0,u)=0$ says that there is no difference by the additive constant.
Therefore $u_{0}$ is unique and $u_{t}$ is unique also.

\item[(ii)]
Let $I_{2}=\{t \in [0,1]\;|\;\text{the equation (\ref{s2}) has solutions for $t$}\}$.
Since we already showed that $I_{2}$ is open, it suffices to show that $I_{2}$ is closed.
By proposition \ref{IJ} and lemma \ref{M-I-J}, we have
\begin{align*}
I(0,u_{t})
\leq&(m+1)(I(0,u_{t})-J(0,u_{t}))\\
\leq&(m+1)(I(0,u_{\tau})-J(0,u_{\tau})).
\end{align*}
The right hand side is independent of $t$.
Therefore, by lemm \ref{I-c2}, we have
\begin{equation*}
\Vert u_{t}\Vert_{C^{2,\varepsilon}}\leq C.
\end{equation*}
Hence it follows from the Ascoli-Arzel\`a theorem that $I_{2}$ is closed.

\item[(iii)]
From 2, $u_{\tau}, u'_{\tau}$ are extended in $u_{t},u'_{t}\;\;(t\in [0,\tau])$.
From 1 we have $u_{t}=u'_{t}$.
\end{enumerate}
\end{proof}

We are in a position to prove main theorem.
\setcounter{section}{1}
\setcounter{defn}{0}
\begin{thm}
Let $(S,\xi,\eta,\Phi)$ be a compact Sasakian manifold. 
We assume that $S$ doesn't admit nontrivial Hamiltonian holomorphic vector fields.
If $S$ has a Sasaki-Einstein metric, then the Sasaki-Einstein metric is unique. 
In other words,
if there are two Sasaki-Einstein metrics $\omega_{1}$ and $\omega_{2}$ on $S$, then $\omega_{1}=\omega_{2}$.
\end{thm}

\begin{proof}
If we have a solution of (\ref{s1}),
then we have a solution of (\ref{s2}).
Therefore theorem \ref{I-uni} is true for (\ref{s1}).
In particular, by proposition \ref{I-open},
if $S$ doesn't have nontrivial Hamiltonian holomorphic vector fields, then $I_{1}$ is open in $t=1$.
Therefore if there are two Sasaki-Einstein metrics $\omega_{1}$ and $\omega_{2}$,
then $\omega_{1}=\omega_{2}$.
\end{proof}
\setcounter{section}{6}
\setcounter{defn}{1}

\end{document}